\newcommand\reallywidehat[1]{%
\savestack{\tmpbox}{\stretchto{%
  \scaleto{%
    \scalerel*[\widthof{\ensuremath{#1}}]{\kern-.6pt\bigwedge\kern-.6pt}%
    {\rule[-\textheight/2]{1ex}{\textheight}}
  }{\textheight}%
}{0.5ex}}%
\stackon[1pt]{#1}{\tmpbox}%
}
\newtheorem{thm}{Theorem}[section]
\newtheorem{lem}[thm]{Lemma}
\newtheorem{cor}[thm]{Corollary}
\newtheorem{prop}[thm]{Proposition}
\theoremstyle{definition}
\newtheorem{exam}[thm]{Example}
\newtheorem{ques}[thm]{Question}
\theoremstyle{remark}
\newtheorem{rem}[thm]{Remark}
\newcommand{\conv}{\mathrm{conv}}
\newcommand{\commentout}[1]{}
\newcommand{\conep}{\mathrm{C}}
\DeclareMathOperator{\cone}{cone}
\DeclareMathOperator{\aff}{aff}
\newcommand{\N}{\mathbb{N}}
\newcommand{\Q}{\mathbb{Q}}
\newcommand{\R}{\mathbb{R}}
\newcommand{\Te}{\mathbb{T}}
\newcommand{\Z}{\mathbb{Z}}
\renewcommand{\phi}{\varphi}
\renewcommand{\rho}{\varrho}
\renewcommand{\epsilon}{\varepsilon}
\DeclareMathOperator{\ehr}{ehr}
\DeclareMathOperator{\Ehr}{Ehr}
\DeclareMathOperator{\Pyr}{Pyr}
\begin{document}

\title[Weighted Ehrhart Theory]{Weighted Ehrhart Theory: \\Extending Stanley's nonnegativity theorem} 

\author[E. Bajo]{Esme Bajo}
\address{University of California, Berkeley, CA, USA}
\email{esme@berkeley.edu}

\author[R. Davis]{Robert Davis}
\address{
	Colgate University \\
	Hamilton, NY, USA }
\email{rdavis@colgate.edu}

\author[J. A. De Loera]{Jes\'us A. De Loera}
\address{University of California, Davis, CA, USA}
\email{deloera@math.udavis.edu}

\author[A. Garber]{Alexey Garber}
\address{
University of Texas Rio Grande Valley, 
Brownsville, TX, USA}
\email{alexey.garber@utrgv.edu}

\author[S. Garzon Mora]{Sof\'ia Garz\'on Mora}
\address{Freie Universität Berlin,  Berlin, Germany}
\email{sofia.garzon.mora@fu-berlin.de}

\author[K. Jochemko]{Katharina Jochemko}
\address{KTH Royal Institute of Technology, Stockholm, Sweden}
\email{jochemko@kth.se}

\author[J. Yu]{Josephine Yu}
\address{Georgia Institute of Technology, Atlanta, GA, USA}
\email{jyu@math.gatech.edu}

\keywords{
    lattice polytopes; Ehrhart theory; weighted enumeration; nonnegativity }
    \subjclass[2020]{52B20; 05A15; 52B45}

\date{\today}

\begin{abstract} 
We generalize R.\ P.\ Stanley's celebrated theorem that the $h^\ast$-polynomial of the Ehrhart series of a rational polytope has nonnegative coefficients and is monotone under containment of polytopes.  We show that these results continue to hold for \emph{weighted} Ehrhart series where lattice points are counted with polynomial weights, as long as the weights are homogeneous polynomials decomposable as sums of products of linear forms that are nonnegative on the polytope. We also show  nonnegativity of the $h^\ast$-polynomial as 
a real-valued function for  a larger family of weights. 

We explore the case when the weight function is the square of a single (arbitrary) linear form. We show stronger results for two-dimensional convex lattice polygons and give concrete examples showing tightness of the hypotheses.
 As an application, we construct a counterexample to a conjecture by Berg, Jochemko, and Silverstein on Ehrhart tensor polynomials.
\end{abstract}

\maketitle

\section{Introduction}\label{sec:intro} 

Let $P\subseteq\R^d$ be a rational convex polytope, that is, a polytope with vertices in $\mathbb{Q}^d$, and let $w: \R^d \to \R$ be a polynomial function, often called a  \emph{weight function}.
A computational problem arising throughout the mathematical sciences is to compute, or at least estimate, the sum of the values $w(x) := w(x_1,\dots,x_d)$ over the set of integer points belonging to $P$, namely
\[
	\ehr(P,w)=\sum_ {x\in P \cap \Z^d} w(x).
 \]

 Weighted sums of the above type are also a classical topic in convex discrete geometry where they have been studied for a long time under the name 
 \emph{polynomial valuations}~\cite{McMullen,PukhlikovKhovanskii,Alesker,BL17}. They appear 
 in the work of Brion and Vergne \cite{BrionVergne1}, who used weighting in the 
 context of Euler-Maclaurin formulas. Other ideas of what it means to be weighted 
 have been proposed later on, for instance, by Chapoton \cite{chapoton_2016}, who developed a related $q$-theory for the case when $w(x)$ is a linear form, also by Stapledon~\cite{Stapledon}, who explored a grading with piece-wise linear functions, and by Ludwig and Silverstein~\cite{Ludwig}, who introduced and studied Ehrhart tensor polynomials based on discrete moment tensors.

Important applications of such weighted problems appear, for instance, in
enumerative combinatorics \cite{andrewspaulereise,quasiweightsfpsac2023}, 
statistics \cite{DG, Chenetalstats}, non-linear optimization
\cite{deloera-hemmecke-koeppe-weismantel:intpoly-fixeddim}, and weighted 
lattice point sums, which have played a key role in the computation of volumes 
and integrals over polytopes \cite{BBDKV08}.
{\sloppy

}

The sum of the weighted integer points in the $n$-th dilate of the 
rational polytope $P$ for nonnegative integers $n\in \N$ is given by 
the \emph{weighted Ehrhart function} $\ehr(nP,w)$. The main object of 
this article is the generating function
  \[
    \Ehr(P, w; t) = \sum_{n\geq 0} \ehr(nP,w) t^n
  \]
  called the \emph{weighted Ehrhart series}. The fact that the weighted Ehrhart series is a rational function has been known for a long time, e.g., it has been used in computational software for at least ten years (\cite{LattE,BrunsSoger2015}). For our purposes we see in Proposition~\ref{prop:rational} why $\Ehr(P, w; t)$ is a rational function of the form 
  \[
    	\Ehr(P, w; t) = \frac{h_{P, w}^*(t)}{(1-t^q)^{r+m+1}}
      \]
      whenever $P$ is an $r$-dimensional rational polytope; here $m=\deg(w)$, $h^*_{P,w}(t)$ is a polynomial of degree at most $q(r+m+1)-1$, and $q$ denotes the smallest positive integer such that $qP$ has vertices in $\Z^d$, called the \emph{denominator} of $P$. 
      
      We say that the empty polytope has denominator $1$. We call $h_{P,w}^*(t)$
the \emph{weighted $h^*$-polynomial of $P$} and its list of
coefficients the \emph{weighted $h^*$-vector of $P$} with respect to the weight $w$.

From the rationality of $\Ehr(P,w;t)$, it follows that the weighted Ehrhart function $\ehr(nP,w)$ is a
quasi-polynomial in $n$, that is, it has the form
{\sloppy

}
\[
	\ehr(nP,w)= \sum_{i=0}^{d+ m} E_i(n) n^i
\]
where the coefficients $E_i : \N \rightarrow \R$ are
periodic functions with periods dividing the denominator of $P$.
The leading coefficient of the $h^*$-polynomial is equal to the integral of the weight $w$ over the polytope $P$; these integrals were studied in \cite{Barvinok-1991},
\cite{Barvinok-1992} and \cite{BBDKV08}. If $w=1$, that is, if $\ehr(nP,w)=|nP \cap \mathbb{Z}^d|$, then we recover the classical Ehrhart theory counting lattice points in dilates of polytopes. Even in this
case, it is an NP-hard problem to compute all of the coefficients $E_i$. See
\cite{barvinokzurichbook, beckrobins} for excellent introductions to
this topic. 
{\sloppy

}

In the classical case of $w=1$, a fundamental theorem by Richard P. Stanley, often called \emph{Stanley's nonnegativity theorem}, states that the $h^\ast$-polynomial of any rational polytope  has only nonnegative integer coefficients~\cite{StanleyDecompositions}. Even stronger, for rational 
polytopes $P$ and $Q$ such that $P\subseteq Q$, Stanley proved $h^\ast _{P,1}(t)\preceq h^\ast _{Q,1}(t)$ where $\preceq$ denotes coefficient-wise comparison. This last property has been known as \emph{$h^\ast$ monotonicity property}. For details and proofs see e.g., \cite{beck-sottile:irrational,StanleyDecompositions,StanleyMonotonicity}.

Positivity and nonnegativity of coefficients is important in algebraic combinatorics (see e.g., \cite{positivityStanley2000} and its references), 
but we must stress that one nice aspect of our results is they connect to the nonnegativity of the associated $h^\ast$-polynomial as real-valued functions. This is a topic that goes back to the work on real algebraic geometry by Hilbert, P\'olya, Artin and others (see \cite{bookPositivePolynomials2001,Marshall}), and it has seen renewed activity in the classical methods of moments, real algebraic geometry, and sums of squares decompositions for polynomials because it provides a natural approach for optimization algorithms (see \cite{Marshall,BlekhermanParriloThomasbook}). 

Motivated by this prior work and context, our article discusses \emph{the nonnegativity and monotonicity properties of the coefficients of weighted $h^\ast$-polynomials, as well as their non-negativity as real-valued functions.} 

\subsection{Our Contributions}
In contrast to its classical counterpart, the weighted $h^\ast$-polynomial may have negative coefficients, even when the weight function is nonnegative over the 
polytope and all of its nonnegative dilates. For example, when $P$ is the line segment $[0,1]\subseteq\R$, one can calculate that
{\sloppy

}
\[
\Ehr(P,1; t) = \frac{1}{(1-t)^2} \quad  \text{ and } \quad \Ehr(P,x^2; t) = \frac{t^2+t}{(1-t)^4},
  \]
  and so their sum is
  \[ \Ehr(P,x^2+1; t) = \frac{2t^2-t+1}{(1-t)^4}.
    \]
    
As can be seen in this simple example, adding Ehrhart series corresponding to
 weights of different degrees may introduce negative coefficients to the $h^\ast$-polynomial since the rational functions have different denominators. 
 We therefore focus on homogeneous polynomials as weight functions. For an investigation of how to deal with more general weights see \cite{quasiweightsfpsac2023}.
 
We now consider the following, slightly more general setup, where the 
weight function $w$ may depend not only on the coordinates of the points 
$nP \cap \Z^d$ but also on the scaling factor $n$.  For any rational 
polytope $P\subseteq \R^d$, the \emph{cone over $P$} (or \emph{homogenization} 
of $P$) is the rational polyhedral cone in $\R^{d+1}$ defined as
\[
    \conep(P) := \cone (P \times \{1\}) = \{ c (p,1) \mid c \geq 0, p \in P\}.
\]

For any polynomial $w$ in $d+1$ variables we consider the weighted Ehrhart series
\[
\Ehr(P,w;t) = \sum_{x \in
  \conep(P) \cap \Z^{d+1}} w(x) t^{x_{d+1}}.
\]
Let $\conep(P)^*$ be the cone consisting of the linear
functions on $\R^{d+1}$ which are nonnegative on $\conep(P)$. 
If the cone $\conep(P)$ is defined by linear inequalities $\ell_1 \geq 0,\dots,\ell_m \geq 0$, then $\conep(P)^*$ is a polyhedral cone generated by nonnegative linear combinations of $\ell_1,\dots,\ell_m$.
We focus on the following two families of polynomials in $d+1$ variables as
weights functions:
\begin{itemize}
    \item [(i)] the semiring $R_P$ consisting of sums of
products of linear forms in $\conep(P)^*$.   Each element
of $R_P$ has the form $c_1 \ell^{a_1} + \cdots + c_k
\ell^{a_k}$ where $c_1,\dots, c_k$ are positive real numbers and $\ell^{a_1},\dots,\ell^{a_k}$ are monomials in the generators 
$\ell_1,\dots,\ell_m$ of $\conep(P)^*$; and

\item[(ii)] the semiring $S_P$ consisting of sums of nonnegative products 
of linear forms on $P$. If a product of linear forms is nonnegative on $P$, 
then each of the linear forms involved is either nonnegative on all of $P$ 
or appears with an even power; otherwise the product would change sign across 
the hyperplane where the linear form vanishes.  Therefore, an element of $S_P$
has the form $s_1 \ell^{a_1} + \cdots + s_k \ell^{a_k}$ where $s_1,\dots, s_k$ 
are squares of products of any linear forms and $\ell^{a_1},\dots,\ell^{a_k}$ are monomials in the generators $\ell_1,\dots,\ell_m$ of $\conep(P)^*$.
\end{itemize}

In $R_P$ each of the linear forms involved are
nonnegative on $P$. In contrast, in $S_P$, each product is nonnegative
but the individual linear forms may have negative values in $P$.
Thus we have $R_P \subseteq S_P$.  Both semirings are
contained in the {\em preordering} generated by $\ell_1,\dots,\ell_m$ 
consisting of elements of the form $s_1 \ell^{a_1} + \cdots + s_k
\ell^{a_k}$ where $s_i$ are arbitrary squares of polynomials instead of just squares of products of linear forms.  See, for example,~\cite{Marshall}.

The main results of this article are the following. 
\theoremstyle{theorem}
\newtheorem*{mainthm}{Nonnegativity Theorem.  (Theorem \ref{thm:main})}
\begin{mainthm}
  Let $P$ be a rational polytope, $\conep(P)$ its cone, and $\conep(P)^*$ the dual cone of  linear functions on $\R^{d+1}$ which are nonnegative on $\conep(P)$. Let $R_P$ and $S_P$ be, respectively, the semirings of sums of products of linear forms in $\conep(P)^*$ and of sums of nonnegative products of linear forms on $P$.
  \begin{enumerate}
\item   If  the weight $w$ is a homogeneous element of $R_P$, then the
coefficients of $h_{P,w}^*(t)$ are nonnegative.
\item  If  the weight $w$ is a homogeneous element of $S_P$, then
  $h_{P,w}^*(t) \geq 0$ for $t \geq 0$.
  \end{enumerate}
  \end{mainthm}

As we mentioned before Stanley also showed that the classical $h^*$-polynomials satisfy a monotonicity property: for lattice polytopes $P$ and $Q$, of possibly different dimension, such that $P \subseteq Q$, we have $h^*_P(t) \preceq h^*_Q(t)$ where $\preceq$ denotes the coefficient-wise inequalities \cite{StanleyMonotonicity}. This can be seen as a generalization of the nonnegativity theorem when we set $P=\emptyset$ in which case the Ehrhart series and thus the $h^\ast$-polynomial is zero.
Now we are able to prove the following:

\theoremstyle{theorem}
\newtheorem*{thm:monotone1}{First Monotonicity Theorem.  (Theorem \ref{thm:monotone1})}
\begin{thm:monotone1}
 Let $P,Q\subseteq\R^d$ be rational polytopes, $P\subseteq Q$, and let $g$ be a common multiple of the denominators $\delta (P)$ of $P$ and $\delta (Q)$ of $Q$. Then, for all weights $w\in R_Q$,
    \[
(1+t^{\delta(P)}+\cdots +t^{g-\delta (P)})^{\dim P+m+1}h^\ast _{P,w}(t) \preceq (1+t^{\delta(Q)}+\cdots +t^{g-\delta (Q)})^{\dim Q+m+1}h^\ast _{Q,w}(t) \, .
\]
In particular, if $P \subseteq Q$ are polytopes with the same denominator, then taking $g=\delta(P)=\delta(Q)$ gives
    \begin{equation}
h^\ast _{P,w}(t) \preceq h^\ast _{Q,w}(t).
    \end{equation}
\end{thm:monotone1}

\theoremstyle{theorem}
\newtheorem*{thm:monotone2}{Second Monotonicity Theorem.  (Theorem \ref{thm:monotone2})}
\begin{thm:monotone2}
 Let $P,Q\subseteq\R^d$ be rational polytopes of the same dimension $D=\dim P=\dim Q$, $P\subseteq Q$, and let $g$ be a common multiple of the denominators $\delta (P)$ of $P$ and $\delta (Q)$ of $Q$. Then, for all weights $w\in S_Q$,
    \[
(1+t^{\delta(P)}+\cdots +t^{g-\delta (P)})^{D+m+1}h^\ast _{P,w}(t) \leq (1+t^{\delta(Q)}+\cdots +t^{g-\delta (Q)})^{D+m+1}h^\ast _{Q,w}(t) \, 
\]
for all $t\geq 0$.
    In particular, if $P \subseteq Q$ are polytopes with the same denominator and dimension, then taking $g=\delta(P)=\delta(Q)$ gives
    \begin{equation}
h^\ast _{P,w}(t) \leq h^\ast _{Q,w}(t)   \text{ for all }t\geq 0.
    \end{equation}
\end{thm:monotone2}

 We wish to emphasize that while Theorem \ref{thm:main} is a generalization of Stanley's nonnegativity theorem, Theorem \ref{thm:monotone1} is closer in spirit to \emph{P\'olya's theorem  on positive polynomials} which says that if a homogeneous polynomial  $f\in {\mathbb R}[X_1, \dots, X_n]$ is strictly positive on the standard simplex 
 \[
    \Delta_n := \{(x_1, \dots, x_n)\in {\mathbb R}^n \mid x_1,\dots,x_n\geq 0, x_1+\dots+x_n=1\},
\]
then for sufficiently large $N$, all of the nonzero coefficients of $(X_1+\dots+X_n)^Nf(X_1, \dots,X_n)$ are strictly positive. Note also, the semiring $R_P$ is a homogenized version of the semiring appearing in \emph{Handelman's theorem} \cite{handelman1988} which says that all polynomials strictly positive on a polytope $P$ lie in the semiring generated by the linear forms which are nonnegative on the polytope. We remark that all homogeneous polynomials are sums of (unrestricted) products of linear forms and it is an important problem to find such decompositions (see \cite{alexanderhirschowitz,MourrainOneto2020,ReznickMonograph1992} and references therein).
Thus our restriction to $R_P$ and $S_P$ is a natural approach to understanding nonnegativity and bringing us close to the best possible result.

To study the limitations of our results we focus on the case when the weight is given by a single arbitrary linear form. In this case we strengthen our results for two dimensional lattice polygons.

\theoremstyle{theorem}
\newtheorem*{thm:polygon}{Theorem \ref{thm:polygon}}
\begin{thm:polygon}
For every (closed) convex lattice polygon $P$ and every linear form $\ell$, the $h^*$-polynomial of $P$ with respect to $w(x)=\ell^2(x)$ has only nonnegative coefficients.
\end{thm:polygon}

In particular, this shows that the weighted $h^\ast$-polynomial of any convex lattice polygon has nonnegative coefficients, even when the linear form takes negative values on the polygon. Furthermore, we provide examples that show that this result is no longer true if the assumptions on the polytope or weight are relaxed. In particular, we construct a $20$-dimensional lattice simplex and a linear form such that the $h^\ast$-polynomial with respect to the square of the linear form has a negative coefficient (Example~\ref{ex:negative}). These results have interpretations and implications in terms of generating functions of Ehrhart tensor polynomials. In particular, the example mentioned above gives a counterexample to a conjecture of Berg, Jochemko and Silverstein~\cite[Conjecture 6.1]{Berg} on the positive semi-definiteness of $h^2$-tensor polynomials of lattice polytopes (Corollary~\ref{cor:tensorcounterexample}).

Unlike the classical results of Stanley for $w=1$, where techniques from commutative algebra can be applied since the Ehrhart series is actually the Hilbert series of a graded algebra, we do not see an obvious connection to commutative algebra methods. Instead, to prove Theorems~\ref{thm:main} and~\ref{thm:monotone1} we consider the cone homogenization of polytopes and half-open decompositions and follow a variation of the triangulation ideas first outlined by Stanley in~\cite{StanleyDecompositions}. While this methodology has been used by many authors since then \cite{BajoBeck,beck-sottile:irrational,JochemkoSanyal}, we require a careful analysis of the properties of the semirings $R_P$ and $S_P$. For this we consider multivariate generating functions for half-open cones and provide explicit combinatorial interpretations using generalized $q$-Eulerian polynomials~\cite{SteingrimssonEulerian}. The $q$-Eulerian polynomials and their relatives frequently appear in enumerative and geometric combinatorics~\cite{BrandenEulerian,BrentiWelker,BeckZonotopes,HaglundZhang}.

This article is organized as follows. In Section~\ref{sec:ehr} we give an explicit formula for the weighted multivariate generating function for half-open simplicial cones (Lemma~\ref{lem:formula}). This formula then allows us to show the rationality of the (univariate) weighted Ehrhart series (Proposition~\ref{prop:rational}) as well as the first part of Theorem~\ref{thm:main} by specialization and using half-open decompositions. The second part of Theorem~\ref{thm:main} is obtained by considering subdivisions of the polytope induced by the linear forms involved in the weight function. A more refined analysis then also allows us to prove the monotonicity Theorems~\ref{thm:monotone1} and~\ref{thm:monotone2}. In Section~\ref{sec:2d} we focus on the case when the weight function is given by a square of a single linear form and prove Theorem~\ref{thm:polygon}. We also show that the assumptions on convexity, denominator, dimension and degree are necessary by providing examples. In Section~\ref{sec:tensors} we describe the connections and implications of our results to Ehrhart tensor polynomials. In particular, we show that weighted Ehrhart polynomials can be seen as certain evaluations of Ehrhart tensor polynomials (Proposition~\ref{prop:identification}), and thus, positive semi-definiteness of $h^2$-tensor polynomials is equivalent to nonnegativity of weighted $h^\ast$-polynomials with respect to squares of linear forms (Proposition~\ref{prop:positivesemidefinite}). In particular, Example~\ref{ex:negative} disproves~\cite[Conjecture 6.1]{Berg} (Corollary~\ref{cor:tensorcounterexample}).

\section{Nonnegativity and monotonicity of weighted $h^*$-polynomials} \label{sec:ehr}
\subsection{Generating series}
Let $P\subseteq \mathbb{R}^d$ be a rational polytope of dimension $r$ with denominator $q$ and let $w : \mathbb{R}^d\rightarrow \mathbb{R}$ be a polynomial of degree $m$. In this section we will see that $\Ehr(P,w;t)$ is a rational function of the form
  \[
    	\Ehr(P, w; t) = \frac{h_{P, w}^*(t)}{(1-t^q)^{r+m+1}}\, ,
      \]
where $h_{P, w}^*(t)$ is a polynomial of degree at most $q(r+m+1)-1$.  Our main goal is to study positivity properties of the numerator polynomial. 
Our approach uses general multivariate generating series of half-open simplicial cones and specializing to obtain the univariate generating function of the homogenization $\conep(P)$ following ideas outlined in~\cite{StanleyDecompositions} but requiring careful analysis of the semirings $R_P$ and $S_P$.

For a polynomial $w(x)$ in $d$ variables, the  multivariate weighted lattice point generating function of the cone $C$ is 
$ \sum_{x \in C \cap \Z^{d}} w(x) z^{x}$ where
$z^x = z_1^{x_1}\cdots z_d^{x_d}$ is a monomial in $d$ variables.  We will now show that this generating function is a rational function and give an explicit formula when the weight is a product of linear forms. 

Our expression uses the following parametrized generalization of Eulerian polynomials. 
For $\lambda \in [0,1]$, let $A_d^{\lambda}(t)$
be the polynomial defined by
\[
\sum_{n \geq 0} (n+\lambda)^dt^n = \frac{A_d^{\lambda}(t)}{(1-t)^{d+1}}.
\]
If $\lambda = 1$, then this is the usual Ehrhart series of a $d$-dimensional unit cube, and $A_d^1(t)$ is the Eulerian polynomial, all of whose roots are real and nonpositive.
If $\lambda = \tfrac{1}{r}$ for some integer $r\geq 1$ then $r^dA_d^{\lambda}(t)$ equals the $r$-colored Eulerian polynomial~\cite{SteingrimssonEulerian}.
For each $\lambda \in [0,1]$ the polynomial $A_d^{\lambda}(t)$ also has only real, nonpositive roots~\cite[Theorem 4.4.4]{BrentiMemoirs}. In particular, all of its coefficients are nonnegative. We formally record this in a lemma.

\begin{lem}[{\cite[Theorem 4.4.4]{BrentiMemoirs}}]
  \label{lem:Eulerian}
For any integer $d\geq 1$ and real number $\lambda \in [0,1]$, the coefficients of $A^\lambda_d(t)$ are nonnegative.
\end{lem}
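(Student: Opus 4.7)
The plan is to exhibit $A_d^\lambda(t)$ as a nonnegative linear combination of polynomials already known to have nonnegative coefficients, exploiting the hypothesis $\lambda \in [0,1]$ through a convex-combination identity. Since $n+\lambda = \lambda(n+1) + (1-\lambda)n$ is a convex combination for $\lambda\in[0,1]$, the binomial theorem yields
\[
(n+\lambda)^d \;=\; \sum_{k=0}^d \binom{d}{k}\, \lambda^k(1-\lambda)^{d-k}\,(n+1)^k n^{d-k},
\]
in which every scalar coefficient $\binom{d}{k}\lambda^k(1-\lambda)^{d-k}$ is nonnegative. Summing against $t^n$ and multiplying both sides of the defining identity by $(1-t)^{d+1}$, this gives
\[
A_d^\lambda(t) \;=\; \sum_{k=0}^d \binom{d}{k}\,\lambda^k(1-\lambda)^{d-k}\, B_{k,d}(t), \qquad B_{k,d}(t) \,:=\, (1-t)^{d+1}\sum_{n\geq 0}(n+1)^k n^{d-k}\, t^n.
\]
The problem thereby reduces to showing that each $B_{k,d}(t)$, for $0\leq k\leq d$, has nonnegative coefficients.

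The main obstacle is this reduced claim. I would handle it through a half-open lattice polytope interpretation: the value $(n+1)^k n^{d-k}$ equals $|nC_k \cap \Z^d|$ for every $n\geq 0$, where $C_k := [0,1]^k \times (0,1]^{d-k}$ is the half-open unit cube obtained from $[0,1]^d$ by deleting the facets $\{x_i=0\}_{i>k}$ (so that $0\cdot C_k = \emptyset$ when $k<d$, consistently with $(0+1)^k\cdot 0^{d-k}=0$). Thus $B_{k,d}(t)$ is the $h^\ast$-polynomial of the half-open lattice polytope $C_k$, which has nonnegative integer coefficients by the half-open analog of Stanley's nonnegativity theorem. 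A direct proof is available: the standard unimodular triangulation of $[0,1]^d$ into $d!$ simplices indexed by permutations of $[d]$ restricts to a decomposition of $C_k$ into $d!$ half-open unimodular simplices, each contributing a single monomial with nonnegative exponent to $B_{k,d}(t)$. At the endpoint $k=d$ one recovers the classical Eulerian polynomial, while for $0\leq k<d$ one obtains a shifted Eulerian polynomial counting permutations by a $k$-dependent descent statistic.

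Putting the two steps together, $A_d^\lambda(t)$ is displayed as a nonnegative real linear combination of polynomials with nonnegative coefficients, hence has nonnegative coefficients itself. I expect the convex-combination identity and the final assembly to be essentially formal; the substantive content is the nonnegativity of each $B_{k,d}(t)$, where the geometric and combinatorial work is concentrated. An alternative route that bypasses the half-open Stanley theorem is to expand $(n+1)^k n^{d-k} = \sum_{j=0}^d c_{k,j}\binom{n+d-j}{d}$ in the Newton basis, whereupon $B_{k,d}(t) = \sum_{j=0}^d c_{k,j} t^j$, and to verify $c_{k,j}\geq 0$ directly by a bijection with permutations of $[d]$ carrying certain marked descents.
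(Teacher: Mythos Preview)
Your argument is correct. The convex-combination expansion of $(n+\lambda)^d$ is valid for $\lambda\in[0,1]$, and your identification of $B_{k,d}(t)$ with the $h^\ast$-polynomial of the half-open cube $[0,1]^k\times(0,1]^{d-k}$ is accurate; nonnegativity then follows from the half-open unimodular triangulation you describe (equivalently, $B_{k,d}(t)$ is a descent-type generating function over $S_d$).

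The paper, however, does not prove this lemma at all: it simply records it as a consequence of Brenti's theorem \cite[Theorem 4.4.4]{BrentiMemoirs} that $A_d^\lambda(t)$ has only real, nonpositive roots, from which nonnegativity of the coefficients is immediate. So your route is genuinely different. Brenti's result is strictly stronger (real-rootedness implies, for instance, log-concavity of the coefficient sequence), but it is an external analytic input. Your argument is more elementary and, notably, stays entirely within the half-open-decomposition toolkit that the paper itself develops and uses in Proposition~\ref{lem:formula} and Theorem~\ref{thm:main}; in that sense it is more self-contained for the purposes of this paper, at the cost of not yielding the sharper real-rootedness statement.
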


Our computations additionally use some concepts which we now introduce. For consistency, we assume that the polytopes live in the $d$-dimensional space $\R^d$ while cones live in the ambient space $\R^{d+1}$.

Let $C$ be a {\em half-open} $(r+1)$-dimensional simplicial cone in $\R^{d+1}$ generated by nonzero integer vectors $v_1,\dots, v_{r+1} \in \Z^{d+1}$ with the first $k$ facets removed where $0 \leq k \leq r+1$.  More precisely,
  \[
C = \{ c_1 v_1 + \cdots + c_{r+1} v_{r+1} \mid c_1, \dots, c_k > 0, \,
c_{k+1}, \dots, c_{r+1} \geq 0\}.
\]
Since $C$ is simplicial, every point $\alpha \in C$ can be written uniquely as
  \[
\alpha = x + s_1 v_1 + \cdots + s_{r+1} v_{r+1}
\]
where $s_1,\dots,s_{r+1}$ are nonnegative integers, and $x$ is in the
\emph{half-open parallelepiped}
\[
\Pi = \{\lambda_1v_1 + \cdots + \lambda_{r+1} v_{r+1} \mid 0 < \lambda_1, \dots, \lambda_k \leq 1, \, 0 \leq \lambda_{k+1}, \dots, \lambda_{r+1} < 1\}.
\]

We obtain the following explicit formula for the multivariate generating function of a half-open simplicial cone if the weight is a product of linear forms. Since every polynomial is a sum of product of linear forms, namely monomials, this gives a formula to compute the generating function for any polynomial weight.

\begin{prop}
\label{lem:formula}
Let $C$ be an $(r+1)$-dimensional half-open simplicial cone in $\R^{d+1}$ with generators $v_1,\hdots,v_{r+1}$ in $R_P$.  Let $w = \ell_1 \cdots \ell_m$ be a product of linear forms in $d+1$ variables. Then  
\begin{equation}
       \sum_{\alpha \in C \cap \Z^{d+1}} w(\alpha) z^{\alpha}   = \sum_{x \in \Pi \cap \Z^{d+1}} \left( z^{x} \sum_{\substack{
        I_1 \uplus \cdots \uplus I_{r+1} = [m]}} \prod_{i \in I_1}
    \ell_i(v_1)\cdots\prod_{i \in
I_{r+1}}\ell_i(v_{r+1})\prod_{j=1}^{r+1}\frac{A_{|I_j|}^{\lambda_j(x)}(z^{v_j})}{(1-z^{v_j})^{|I_j|+1}}\right)
    \end{equation}
    where $\Pi$ is the half-open parallelepiped as above and each $x \in \Pi$ is written $x = \lambda_1(x) v_1 + \cdots +
    \lambda_{r+1}(x) v_{r+1}$.  The innermost sum runs over all the ordered partitions of $[m]$ into $r+1$ (possibly empty) parts and $I_1\uplus\cdots\uplus I_{r+1}$ denotes the disjoint union of these parts.
\end{prop}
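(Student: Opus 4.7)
The plan is to perform a direct computation starting from the unique decomposition of lattice points in the half-open simplicial cone $C$. First I would use that every $\alpha \in C \cap \Z^{d+1}$ has a unique representation $\alpha = x + s_1 v_1 + \cdots + s_{r+1} v_{r+1}$ with $x \in \Pi \cap \Z^{d+1}$ and nonnegative integers $s_1, \dots, s_{r+1}$. This splits the generating function as
\[
\sum_{\alpha \in C \cap \Z^{d+1}} w(\alpha) z^\alpha \;=\; \sum_{x \in \Pi \cap \Z^{d+1}} z^x \sum_{s_1,\dots,s_{r+1} \geq 0} w\Bigl(x + \sum_{j=1}^{r+1} s_j v_j\Bigr) z^{\sum_{j=1}^{r+1} s_j v_j}.
\]

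Next I would exploit the linearity of each $\ell_i$. Writing $x = \lambda_1(x) v_1 + \cdots + \lambda_{r+1}(x) v_{r+1}$ in the basis $v_1,\dots,v_{r+1}$, one obtains $\ell_i(\alpha) = \sum_{j=1}^{r+1} (\lambda_j(x) + s_j)\, \ell_i(v_j)$. Multiplying over $i = 1, \dots, m$ and expanding by distributivity produces a sum indexed by ordered partitions $I_1 \uplus \cdots \uplus I_{r+1} = [m]$, where the block $I_j$ records the indices $i$ for which the $j$-th summand of $\ell_i(\alpha)$ was selected. This yields
\[
w(\alpha) \;=\; \sum_{I_1 \uplus \cdots \uplus I_{r+1} = [m]} \prod_{j=1}^{r+1} (\lambda_j(x) + s_j)^{|I_j|} \prod_{i \in I_j} \ell_i(v_j).
\]

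After substituting this expansion, interchanging the two sums and pulling out the factors depending only on $x$, the inner sum over $s_1, \dots, s_{r+1}$ factors into a product of $r+1$ independent one-variable sums. For each $j$, the very definition of $A^{\lambda}_d(t)$ gives
\[
\sum_{s_j \geq 0} (s_j + \lambda_j(x))^{|I_j|} (z^{v_j})^{s_j} \;=\; \frac{A^{\lambda_j(x)}_{|I_j|}(z^{v_j})}{(1 - z^{v_j})^{|I_j|+1}},
\]
where the case $|I_j| = 0$ is handled by the convention $A_0^\lambda(t) = 1$ (so that the factor reduces to $1/(1-z^{v_j})$, as it should). Assembling all factors recovers the stated formula.

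The argument is almost entirely mechanical once the unique decomposition of $C$ is invoked. The one place that deserves care is the multi-distributive expansion that rewrites $\prod_i \ell_i(\alpha)$ as a sum over ordered set partitions of $[m]$; this is where the combinatorial shape of the right-hand side is generated, and it is the only nontrivial bookkeeping step. All manipulations can be read as formal power series identities, or equivalently as convergent expansions in the region $|z^{v_j}| < 1$ for all $j$, so no analytic subtleties arise.
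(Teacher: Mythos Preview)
Your proposal is correct and follows essentially the same approach as the paper: both use the unique decomposition $\alpha = x + \sum_j s_j v_j$ with $x \in \Pi$, expand $\prod_i \ell_i(\alpha)$ via distributivity into a sum over ordered set partitions of $[m]$, and then recognize each resulting one-variable sum as $A^{\lambda_j(x)}_{|I_j|}(z^{v_j})/(1-z^{v_j})^{|I_j|+1}$.
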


Note that when $m=0$ and the weight is constant, 
there is only the partition into empty sets where by definition the products are all $1$ (empty products) and the Eulerian polynomials are all $1$.

\begin{proof}
Using that any $\alpha\in C$ is $\alpha=x+s_1v_1+\cdots s_{r+1}v_{r+1}$ for $x\in \Pi$, the generating function is   
    \[
        \begin{aligned}
           \sum_{\alpha \in C \cap \Z^{d+1}} w(\alpha) z^{\alpha}  &= \sum_{\alpha \in C \cap \Z^{d+1}}
           \left(\prod_{i=1}^m \ell_i(\alpha)\right) z^{\alpha} \\
            &= \sum_{x \in \Pi \cap \Z^{d+1}} \sum_{s_1 \geq 0} \cdots
            \sum_{s_{r+1} \geq 0} \underbrace{\prod_{i=1}^m
              \ell_i({x+s_1v_1+\cdots+s_{r+1}v_{r+1}})}_{(*)}z^{x+
              s_1 v_1+\cdots+s_{r+1} v_{r+1}}.
        \end{aligned}
    \]
    Since $x \in \Pi$ is $x = \lambda_1(x) v_1 + \cdots +
    \lambda_{r+1}(x) v_{r+1}$, using linearity of each $\ell_i$ we can expand out
    \begin{align*}
        (*) &= \sum_{
        \substack{
        I_1 \uplus \cdots \uplus I_{r+1} = [m]}} \left[ \prod_{i \in I_1}
      \ell_i((s_1+\lambda_1(x)) v_1)\right] \cdots \left[\prod_{i \in
      I_{r+1}}\ell_i((s_{r+1}+\lambda_{r+1}(x)) v_{r+1})\right]
    \\
 &=    \sum_{\substack{
        I_1 \uplus \cdots \uplus I_{r+1} = [m]}} \left[ \prod_{i \in I_1} \ell_i(v_1)\right]\cdots \left[\prod_{i \in I_{r+1}}\ell_i(v_{r+1}) \right](s_1+\lambda_1(x))^{|I_1|}\cdots(s_{r+1}+\lambda_{r+1}(x))^{|I_{r+1}|}.
  \end{align*}
    where $i \in I_j$ represents the term $(s_j+\lambda_j(x))v_j$ being chosen from $\ell_i$ when multiplying out. 
    Placing this into our original series, we obtain
    \begin{equation}
       \resizebox{.9 \textwidth}{!}{${\displaystyle\sum_{\alpha \in C \cap \Z^{d+1}} w(\alpha) z^{\alpha}   = \sum_{x \in \Pi \cap \Z^{d+1}} z^x \sum_{\substack{
        I_1 \uplus \cdots \uplus I_{r+1} = [m]}} \prod_{i \in I_1}
    \ell_i(v_1)\cdots\prod_{i \in
      I_{r+1}}\ell_i(v_{r+1})\prod_{j=1}^{r+1}\left(\sum_{s_j \geq
        0}(s_j+\lambda_j(x))^{|I_j|}z^{s_jv_j}\right)}$.}
    \end{equation}
    For the innermost sum on the right, we can write for each $j$
    \begin{equation}
      \label{eqn:Eulerian}
        \sum_{s_j \geq 0}(s_j+\lambda_j(x))^{|I_j|}(z^{v_j})^{s_j} = \frac{A_{|I_j|}^{\lambda_j(x)}(z^{v_j})}{(1-z^{v_j})^{|I_j|+1}}.
    \end{equation}
This completes the proof.
\end{proof}

In order to show that $\Ehr(P,w;t)$ is a rational function for any rational polytope $P$ we consider partitions into half-open simplices. 
Given affinely independent vectors $u_1,\ldots, u_{r+1}\in \R^{d}$, the half-open simplex with the first $k\in \{0,1,\ldots,r+1\}$ facets removed is defined as 
\[
\Delta = \left\{\sum _{i=1}^{r+1} c_i u_i \mid  c_1,\ldots c_k >0, c_{k+1},\ldots, c_{r+1}\geq 0,\sum _{i=1}^{r+1} \lambda _i=1 \right\} \, ,
\]
and the homogenization of $\Delta$ is the half-open simplicial cone 
  \[
\conep(\Delta) = \{ c_1 v_1 + \cdots + c_{r+1} v_{r+1} \mid c_1 > 0, \dots, c_k > 0,
c_{k+1} \geq 0, \dots, c_{r+1} \geq 0\}
\]
where $v_i=(u_i,1)$ for all $i$.

Given an $r$-dimensional polytope $P$ and a triangulation, we can partition $P$ into half-open simplices in the following way. Let $q$ be a generic point in the relative interior of $P$ and let $S=\conv \{u_1,\ldots, u_{r+1}\}$ be a maximal cell in the triangulation where $\conv\{\cdot \}$ denotes the convex hull. We say that a point $p\in S$ is \emph{visible} from $q$ if $(p,q] \cap S =\emptyset$. A half-open simplex, denoted $H_q S$, is then obtained by removing all points that are visible 
from $q$, which can be seen to be equal to
\[
H_q S = \{c_1u_1+\cdots +c_{r+1}u_{r+1} \in S \mid c_i>0 \text{ for all }i\in I_q\}
\]
where $I_q=\{i\in [r+1]\mid u_i \text{ not visible from } q\}$.

The following is a special case of a result of K\"oppe and Verdoolaege~\cite{KoeppeHalfopen}.
\begin{thm}[{\cite{KoeppeHalfopen}}]
\label{thm:halfopen}
Let $P$ be a polytope, $q\in \aff P$ be a generic point and $S_1,\ldots, S_m$ be the maximal cells of a triangulation of $P$. Then
\[
P \ = \ H_q S_1\uplus H_q S_2 \uplus \cdots \uplus H_q S_m
\]
is a partition into half-open simplices.
\end{thm}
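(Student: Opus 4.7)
My plan is to show that every $p \in P$ lies in exactly one of the half-open simplices $H_q S_i$ by a perturbation argument: I associate to each $p$ the unique maximal simplex whose relative interior one enters by moving infinitesimally from $p$ toward $q$. First I would reformulate the defining condition for $H_q S$ in analytic terms. Writing $p = \sum c_j u_j$ and $q = \sum q_j u_j$ in barycentric coordinates with respect to a maximal simplex $S = \mathrm{conv}\{u_1,\ldots,u_{r+1}\}$, the shifted point $p_\epsilon := (1-\epsilon) p + \epsilon q$ has barycentric coordinates $(1-\epsilon) c_j + \epsilon q_j$. A direct computation shows that $p_\epsilon$ lies in the relative interior of $S$ for all sufficiently small $\epsilon > 0$ if and only if $c_j > 0$ holds for every index $j$ with $q_j \leq 0$. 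One must then verify that this sign condition matches the combinatorial definition of $H_q S$ via the visibility index set $I_q$, by unpacking the condition $(u_j, q] \cap S = \emptyset$ in terms of the signs of the barycentric coordinates $q_j$, with a case distinction according to whether $q$ is inside $S$, just outside $S$ across a single facet, or farther outside. This reconciliation is the most delicate step.

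Second I would invoke the genericity of $q$. Generic $q \in \aff P$ avoids the finitely many affine hulls of the proper faces of the triangulation; consequently, for any $p \in P$ and all sufficiently small $\epsilon > 0$, the shifted point $p_\epsilon$ avoids every cell of the triangulation of codimension at least one. Geometrically, the open cones of directions emanating from $p$ into the various maximal simplices containing $p$ partition a punctured neighborhood of $p$ (modulo the minimal face of the triangulation through $p$) into disjoint open regions, and by genericity the direction $q - p$ lies in the interior of exactly one such region. Call the corresponding maximal simplex $S(p)$.

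Combining the two steps yields the conclusion: by the perturbation criterion, $p \in H_q S$ if and only if $p_\epsilon \in \mathrm{relint}(S)$ for all small $\epsilon > 0$, if and only if $S = S(p)$. Both existence and uniqueness of the half-open simplex containing $p$ follow immediately. The main obstacle is the alignment in the first step between the visibility description of $I_q$ and the analytic sign pattern on the barycentric coordinates $q_j$; once this identification is established, the remainder follows from genericity together with the fact that distinct maximal simplices have disjoint relative interiors.
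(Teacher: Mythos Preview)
The paper does not supply its own proof of this theorem; it is quoted as a special case of a result of K\"oppe and Verdoolaege and is used as a black box. Your perturbation argument---characterising $H_q S$ as the set of $p\in S$ for which $p_\epsilon=(1-\epsilon)p+\epsilon q$ lies in $\operatorname{relint}(S)$ for all sufficiently small $\epsilon>0$, and then using genericity of $q$ to see that $p_\epsilon$ lands in the relative interior of exactly one maximal cell---is the standard route to this kind of half-open decomposition and is essentially correct.

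Two remarks. First, the step you single out as most delicate, matching the barycentric sign condition with the paper's index set $I_q$, is avoidable. The primary definition of $H_q S$ in the paper is ``$S$ minus the points visible from $q$''; from $(p,q]\cap S\neq\emptyset$ together with convexity one gets directly that $p$ is not visible if and only if $p_\epsilon\in S$ for small $\epsilon$, and genericity of $q$ (so that the line through $p$ and $q$ is not contained in any facet hyperplane) upgrades this to $p_\epsilon\in\operatorname{relint}(S)$. You can therefore bypass the $I_q$ formula entirely, which also sidesteps the case distinction you anticipate. Second, your uniqueness step tacitly assumes $p_\epsilon\in P$ for small $\epsilon$, which is what guarantees that $p_\epsilon$ lies in \emph{some} maximal cell; this uses $q\in P$ rather than merely $q\in\aff P$. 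The paper's surrounding text in fact takes $q$ in the relative interior of $P$, so this is fine in context, but it is worth stating the hypothesis you are actually using.
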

With the notation as in the previous theorem, it follows that
\begin{equation}\label{eq:halfopendecompcones}
    \conep(P) \ = \ \conep(H_q S_1)\uplus \conep(H_q S_2) \uplus \cdots \uplus \conep(H_q S_m), 
\end{equation}
that is, the homogenization $\conep(P)$ of $P$ can be partitioned into half-open simplicial cones. This, together with Proposition~\ref{lem:formula}, allows us to show rationality of $\Ehr (P,w;t)$.

\begin{prop}\label{prop:rational}
 For any rational polytope $P$ of dimension $r$ and any degree-$m$ form $w$ on $\conep(P)$, the weighted Ehrhart series is a rational function of the form
  \[
\Ehr(P, w; t) = \frac{h^*_{P,w}(t)}{(1-t^q)^{r+m+1}}
\]
where $q$ is a positive integer such that $q P$ has integer vertices and $h^\ast _{P,w} (t)$ is a polynomial of degree at most $q(r+m+1)-1$.
\end{prop}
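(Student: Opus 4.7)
The plan is to combine the half-open decomposition of $\conep(P)$ from Theorem~\ref{thm:halfopen} with the explicit multivariate formula of Proposition~\ref{lem:formula}, and then specialize to recover the univariate weighted Ehrhart series. First, by linearity of the operator $w \mapsto \Ehr(P,w;t)$, it suffices to treat the case $w = \ell_1 \cdots \ell_m$: every homogeneous degree-$m$ polynomial is a sum of monomials, and each monomial is itself a product of $m$ linear forms (its coordinate factors).

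Next, I would choose a triangulation of $P$ whose vertices are vertices of $P$, so that for each such vertex $u$ we have $qu \in \Z^d$ and the corresponding lattice generator $(qu,q)$ lies in $\Z^{d+1}$. By Theorem~\ref{thm:halfopen} and~\eqref{eq:halfopendecompcones}, the cone $\conep(P)$ then partitions into finitely many half-open simplicial cones $\conep(H_q S_i)$, each with integer generators whose last coordinate equals $q$. Applying Proposition~\ref{lem:formula} to each such cone and specializing the variables $z$ to $(1,\dots,1,t)$ gives $z^{v_j}=t^q$ for every generator $v_j$, so the denominator of every summand collapses to
\[
\prod_{j=1}^{r+1}(1-t^q)^{|I_j|+1} \;=\; (1-t^q)^{r+m+1},
\]
independently of the ordered partition $I_1 \uplus \cdots \uplus I_{r+1} = [m]$ and of the point $x \in \Pi$. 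Summing the contributions of all pieces, the series takes the form $\Ehr(P,w;t) = h^\ast_{P,w}(t)/(1-t^q)^{r+m+1}$, with $h^\ast_{P,w}(t)$ a polynomial assembled from the weighted-Eulerian numerators.

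For the degree bound $\deg h^\ast_{P,w}(t) \leq q(r+m+1)-1$, the key observation is that at most $r$ facets of any simplex $S$ are visible from a generic point $q \in \aff P$: if all $r+1$ were visible, pairing the strict facet inequalities $\langle n_i, q\rangle > b_i$ against the positive relation $\sum c_i n_i = 0$ on the outward normals of $S$ yields $0 > \sum c_i b_i$, contradicting the fact that vertices of $S$ force $0 \leq \sum c_i b_i$. Consequently, in each half-open parallelepiped $\Pi$ at least one $\lambda_j$ obeys the strict bound $\lambda_j < 1$, so every integer point $x \in \Pi \cap \Z^{d+1}$ satisfies $x_{d+1} = q(\lambda_1 + \cdots + \lambda_{r+1}) \leq q(r+1)-1$. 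Combined with $\deg A^{\lambda_j(x)}_{|I_j|}(t^q) \leq q|I_j|$ and $\sum_j |I_j| = m$, this yields $\deg h^\ast_{P,w}(t) \leq q(r+1)-1 + qm = q(r+m+1)-1$. The main obstacle I anticipate is precisely this final step: a naive reading of the formula gives only the weaker bound $q(r+m+1)$, and sharpening it requires either the visibility argument sketched above or, alternatively, invoking quasi-polynomiality of $\ehr(nP,w)$ of degree at most $r+m$ (which in turn follows from $|nP\cap\Z^d|=O(n^r)$ and the degree-$m$ homogeneity of $w$) together with the standard identity relating the generating function of a quasi-polynomial to its numerator degree.
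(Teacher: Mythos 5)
Your proposal follows essentially the same route as the paper's proof: reduce to a product of linear forms, take a half-open triangulation using only vertices of $P$ so the cone generators have last coordinate $q$, specialize the multivariate formula of Proposition~\ref{lem:formula} at $z=(1,\dots,1,t)$, and collapse the denominator via $\sum_j(|I_j|+1)=r+m+1$. Your explicit justification of the degree bound (at most $r$ facets of a simplex are visible, so some $\lambda_j<1$ and $x_{d+1}\leq q(r+1)-1$, plus $\deg A^{\lambda}_{|I_j|}(t^q)\leq q|I_j|$) is correct and in fact supplies a detail the paper leaves implicit.
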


\begin{proof}
Let $S_1,\ldots, S_m$ be the maximal cells of a triangulation of $P$ using no new vertices, that is, for all $i$, the vertex set of $S_i$ is contained in the vertex set of $P$. Let 
\[
P \ = \ H_q S_1\uplus H_q S_2 \uplus \cdots \uplus H_q S_m
\]
be a partition into half-open simplices, and let \[
\Ehr (H_q S_i, w;t)=\sum _{x\in C(H_q S_i)\cap \Z^{d+1}}w(x)t^{x_{d+1}}
\]
for all $i$. By equation~\eqref{eq:halfopendecompcones}, we have 
\[
\Ehr (P,w;t) = \Ehr (H_q S_1,w;t)+\cdots +\Ehr (H_q S_m,w;t).
\]
It thus suffices to prove the claimed rational form for all half-open simplices in the partition.

Let $\Delta=H_q S_i$ be a rational half-open simplex in the partition.  Let $v_1,\dots,v_{r+1} \in \Z^{d+1}$ be generators of the half-open simplical cone $\conep(\Delta)$. Since the triangulation of $P$ used only vertices of $P$, we can choose $v_1,\ldots, v_{r+1}\in \Z^{d+1}$ such that their last coordinates are all equal to $q$.

Since every degree-$m$ form is a sum of monomials, each of which is a product of linear forms, it furthermore suffices to consider the case when $w$ is a product of linear forms.  The weighted Ehrhart series is obtained by substituting $z_1=\cdots=z_d=1$ and $z_{d+1}=t$ into the generating function in Proposition~\ref{lem:formula}.  Thus 
\begin{equation*}
\Ehr(\Delta, w; t)  = \sum_{x \in \Pi \cap \Z^{d+1}} \left( t^{x_{d+1}} \sum_{\substack{
        I_1 \uplus \cdots \uplus I_{r+1} = [m]}} \prod_{i \in I_1}
    \ell_i(v_1)\cdots\prod_{i \in
I_{r+1}}\ell_i(v_{r+1})\prod_{j=1}^{r+1}\frac{A_{|I_j|}^{\lambda_j(x)}(t^q)}{(1-t^q)^{|I_j|+1}}\right).
    \end{equation*}
    where $\Pi$ is the half-open parallelepiped in $\conep(\Delta)$ and each $x \in \Pi$ is written $x = \lambda_1(x) v_1 + \cdots +
    \lambda_{r+1}(x) v_{r+1}$.

    Since $|I_1| + \cdots |I_{r+1}| + r+1 = m+r+1$, we have
    \begin{equation}
\label{eqn:commonDenom}
      \prod_{j=1}^{r+1} \frac{1}{(1-t^q)^{|I_j|+1}} = \frac{1}{(1-t^q)^{m+r+1}}.\end{equation}

    Then we have
    \begin{equation}
      \label{eqn:hstar}
h^*_{\Delta,w}(t)  = \sum_{x \in \Pi \cap \Z^{d+1}} t^{x_{d+1}} \sum_{\substack{
        I_1 \uplus \cdots \uplus I_{r+1} = [m]}} \prod_{i \in I_1}
    \ell_i(v_1)\cdots\prod_{i \in
      I_{r+1}}\ell_i(v_{r+1})\prod_{j=1}^{r+1}A_{|I_j|}^{\lambda_j(x)}(t^q).
     \end{equation}
     Thus the claim follows with $h^\ast _{P, w}(t)=h^\ast _{H_q S_1, w}(t)+\cdots + h^\ast _{H_q S_m, w}(t)$.
  \end{proof}

\begin{rem}
In the multivariate version of the weighted Ehrhart rational function, the denominators do not simplify nicely as in~\eqref{eqn:commonDenom}. When bringing all constituents of the multivariate generating function of $C(P)$ in a common denominator this affects the positivity of the numerator polynomial.
\end{rem}

\subsection{Nonnegativity}

We are now ready to prove the main theorem stated in the introduction.   Recall that $R_P$ is the semiring consisting of sums of
products of nonnegative linear forms on $P$ and $S_P$ is the semiring consisting of sums of nonnegative products of linear forms on $P$. 
\begin{thm}[Nonnegativity Theorem]
\label{thm:main}
  Let $P$ be a rational polytope.
  \begin{enumerate}
\item   If  the weight $w$ is a homogeneous element of $R_P$, then the
coefficients of $h_{P,w}^*(t)$ are nonnegative.
\item  If  the weight $w$ is a homogeneous element of $S_P$, then
  $h_{P,w}^*(t) \geq 0$ for $t \geq 0$.
  \end{enumerate}
  \end{thm}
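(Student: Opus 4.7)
The plan is to apply the explicit formula~\eqref{eqn:hstar} for the weighted $h^\ast$-polynomial of a half-open simplex obtained in the proof of Proposition~\ref{prop:rational} to the pieces of a suitable half-open decomposition of $P$, exploiting the nonnegativity of the parametric Eulerian polynomials (Lemma~\ref{lem:Eulerian}).

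For part~(1), since $\Ehr(P,\cdot\,;t)$ is linear in the weight, I would first reduce to $w=\ell_1\cdots\ell_m$ with each $\ell_i\in\conep(P)^\ast$. Triangulate $P$ without introducing new vertices and apply Theorem~\ref{thm:halfopen} to partition $P$ into half-open simplices $\Delta_1,\ldots,\Delta_N$. Every generator $v_j$ of each $\conep(\Delta_\alpha)$ then lies in $\conep(P)$, so $\ell_i(v_j)\geq 0$ for all $i,j$; combined with Lemma~\ref{lem:Eulerian}, every summand in~\eqref{eqn:hstar} is a polynomial in $t$ with nonnegative coefficients. Hence each $h^\ast_{\Delta_\alpha,w}(t)$ has nonnegative coefficients, and since the pieces share the common denominator $q$ of $P$, so does $h^\ast_{P,w}(t)=\sum_\alpha h^\ast_{\Delta_\alpha,w}(t)$.

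For part~(2), linearity again reduces us to a single summand $w_0=(\ell'_1\cdots\ell'_k)^2\,\ell_{i_1}\cdots\ell_{i_j}$, where the $\ell'_s$ are arbitrary linear forms on $\R^{d+1}$ and each $\ell_{i_s}\in\conep(P)^\ast$. Refine a triangulation of $P$ so that it is compatible with the hyperplane arrangement $\{\ell'_s=0\}_s$, and take the associated half-open decomposition into half-open simplices $\Delta$. On each $\Delta$ every $\ell'_s$ has constant sign on $\conep(\Delta)$, so one can choose signs $\epsilon_s(\Delta)\in\{\pm 1\}$ with $\epsilon_s\ell'_s$ nonnegative on $\conep(\Delta)$; since $(\epsilon_s\ell'_s)^2=(\ell'_s)^2$ and $\ell_{i_s}$ remains nonnegative on $\conep(\Delta)\subseteq\conep(P)$, the weight $w_0$ is exhibited as a product of linear forms each in $\conep(\Delta)^\ast$, i.e.\ $w_0\in R_\Delta$. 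By part~(1), each $h^\ast_{\Delta,w_0}(t)$ has nonnegative coefficients.

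The refined subdivision may introduce new vertices, so each $\Delta$ can have denominator $q_\Delta$ exceeding $q_P$. Letting $Q$ be a common multiple and bringing all weighted Ehrhart series over $(1-t^Q)^{r+m+1}$ yields the identity
\[
h^\ast_{P,w_0}(t)\cdot\bigl(1+t^{q_P}+\cdots+t^{Q-q_P}\bigr)^{r+m+1}=\sum_\Delta h^\ast_{\Delta,w_0}(t)\cdot\bigl(1+t^{q_\Delta}+\cdots+t^{Q-q_\Delta}\bigr)^{r+m+1}.
\]
The right-hand side has nonnegative coefficients, while the factor multiplying $h^\ast_{P,w_0}(t)$ on the left is strictly positive for $t\geq 0$; hence $h^\ast_{P,w_0}(t)\geq 0$ for all $t\geq 0$. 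The main obstacle is precisely this final assembly step: the sign-compatible refinement generically enlarges the denominator, and the compensating factor $(1+t^{q_P}+\cdots+t^{Q-q_P})^{r+m+1}$ is a polynomial rather than a monomial, which is exactly why part~(2) yields only pointwise (not coefficient-wise) nonnegativity, consistent with Example~\ref{ex:negative}.
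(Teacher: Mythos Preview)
Your argument for part~(1) matches the paper's proof exactly.

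For part~(2) your strategy---subdivide $P$ along the hyperplanes $\{\ell'_s=0\}$, flip signs on each piece so that all linear forms in the product become nonnegative there, apply the part~(1) argument locally, and reassemble over a common denominator $(1-t^Q)^{r+m+1}$---is also the paper's strategy. One cosmetic difference: the paper works with the \emph{closed} full-dimensional cells $Q$ of the hyperplane subdivision rather than a half-open refined triangulation, noting that the weight $w$ vanishes on the overlaps (which lie in some $\{\ell'_s=0\}$) so the closed Ehrhart series still sum to $\Ehr(P,w;t)$. Your half-open route is equally valid, but then you should appeal directly to formula~\eqref{eqn:hstar} for half-open simplices rather than to ``part~(1)'', which as stated concerns closed polytopes.

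There is, however, one genuine gap. The linear forms $\ell'_s$ occurring in an element of $S_P$ are arbitrary and may have irrational coefficients. In that case your refined triangulation acquires irrational vertices, the pieces $\Delta$ are no longer rational simplices, the denominators $q_\Delta$ are undefined, and neither Proposition~\ref{prop:rational} nor formula~\eqref{eqn:hstar} applies. The paper closes this gap by first treating the rational case as above and then invoking continuity: for fixed $P$, the map $w\mapsto h^\ast_{P,w}$ is linear, hence continuous, from degree-$m$ forms to polynomials, and the set of polynomials that are nonnegative on $[0,\infty)$ is closed; thus the conclusion passes to the closure and in particular to all of $S_P$. You need to add this step.
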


\begin{proof}
Let $P$ be a rational polytope of dimension $r$.

For (1), it suffices to prove the statement when the weight is a product 
of nonnegative linear forms on $\conep(P)$. The proof follows from the argument given in the proof of Proposition~\ref{prop:rational} where $h^\ast _{P,w}(t)$ is expressed as a sum of polynomials $h^\ast _{\Delta ,w}(t)$ as given in Equation~\ref{eqn:hstar} where $\Delta$ ranges over all half-open simplices in a half-open triangulation of $P$. Each of the vectors $v_i$ in Equation~\ref{eqn:hstar} is a generator of $\conep(P)$. Thus, if $w\in R_P$, $h^\ast _{\Delta, w}(t)$ has nonnegative coefficients and so does $h^\ast _{P, w}(t)$ as a sum of these polynomials.

For (2), let $w$ be a product of linear forms
$\ell_1,\dots,\ell_m$ on $\conep(P)$, and assume $w$ is nonnegative on $P$.
First suppose $\ell_1,\dots,\ell_m$ all have rational coefficients.
Subdivide $P$ into rational polytopes using the hyperplanes $\ell_1=0, \dots, \ell_m=0$.  Let $s$ be a positive integer such that $sQ$ has integer coordinates for every $r$-dimensional polytope $Q$ that is part of the subdivision.  Then $s$ is divisible by the denominator $q=\delta (P)$ of $P$.  On each such polytope $Q$, each linear form $\ell_i$ is either entirely nonnegative or entirely nonpositive, and the number of nonpositive ones is even because their product $w$ is nonnegative.
Thus after changing the signs of an even number of the linear forms on $Q$, which does not change $w$, we can apply the part (1) result to obtain that
\[
\Ehr(Q,w;t) = \frac{h_Q(t)}{(1-t^s)^{r+m+1}} = \frac{h^\ast_{Q,w}(t)(1+t^{\delta (Q)}+\ldots +t^{s-\delta (Q)})^{r+m+1}}{(1-t^{\delta (Q)})^{r+m+1}(1+t^{\delta (Q)}+\ldots +t^{s-\delta (Q)})^{r+m+1}}
\]
where $h_Q(t)$ has nonnegative coefficients for every polytope $Q$ in the subdivision since $h^\ast_{Q,w}(t)$ has nonnegative coefficients by part (1).  The weight $w$ is zero on the boundaries where the polytopes overlap in the subdivision, so the Ehrhart series of $P$ is the sum of Ehrhart series of the $r$-dimensional polytopes in the subdivision.
Summing them up gives
\[
\Ehr(P,w;t) = \frac{h(t)}{(1-t^s)^{r+m+1}},
\]
for some polynomial $h(t)$ with nonnegative coefficients.
Since $s$ is divisible by the denominator $q$ of $P$, we have
\[
\frac{h^*_{P,w}(t)}{(1-t^q)^{r+m+1}} =\frac{h(t)}{(1-t^s)^{r+m+1}} =  \frac{h(t)}{((1-t^q)(1+t^q+t^{2q}+\dots+t^{s-q}))^{r+m+1}},
\]
so \[h^*_{P,w}(t)(1+t^q+t^{2q}+\cdots+t^{s-q})^{r+m+1} = h(t).\]  The polynomial $h(t)$ has nonnegative coefficients, so $h(t) > 0$ for $t > 0$.  It follows that $h_{P,w}^*(t) > 0$ for all $t > 0$.  This proves part (2) when the linear forms have rational coefficients.

To deal with irrational coefficients, note that for a fixed polytope $P$, the map that sends a weight $w$ to the corresponding $h^*$-polynomial $h^\ast _{P,w}(t)$ is a linear, hence continuous, map from the vector space of homogeneous degree $m$ polynomials to the vector space of degree $\leq r+m$ polynomials.  The set of polynomials $h^*$ satisfying $h^*(t) \geq 0$ when $t \geq 0$ is a closed set.  Thus we obtain the result (2) for linear forms with irrational coefficients as well.
\end{proof}

\subsection{Monotonicity}

In this subsection we generalize Stanley's monotonicity result for the $h^\ast$-polynomial for rational polytopes to a weighted version by proving Theorem~\ref{thm:monotone1}. Our proof follows a similar structure as the proof of nonnegativity. We start by proving a version of the claim for pyramids over half-open simplices and then extend it to all rational polytopes. This will become useful when comparing $h^\ast$-polynomials of polytopes of different dimension in the general case. 

Given a half-open $r$-dimensional rational simplex $F\subseteq \R^d$, say
\[
F=\left\{\lambda_1v_1+\cdots+\lambda_{r+1}v_{r+1} \mid  \lambda_1,\hdots,\lambda_k\geq 0, \lambda_{k+1},\hdots,\lambda_{r+1}>0, \lambda_1+\cdots+\lambda_{r+1}=1 \right\},
\]
and a rational point $u\in \R^d$ not in the affine span of $F$, we let the \emph{pyramid of $u$ over $F$} be
\begin{align*}
\Pyr(u,F):=\{ &\mu u+\lambda_1v_1+\cdots+\lambda_{r+1}v_{r+1}\mid
\\
&\mu,\lambda_1,\hdots,\lambda_k\geq 0, \lambda_{k+1},\hdots,\lambda_{r+1}>0, \mu+\lambda_1+\cdots+\lambda_{r+1}=1 \}.
\end{align*}
We denote the $s$-fold pyramid of $u_1,\hdots,u_s\in\Q^d$ over $F$ by 
\[
\Pyr^{(s)}(u_1,\hdots,u_s,F):=\Pyr(u_1,\Pyr(u_2,\hdots \Pyr(u_s,F))),
\]
now a half-open simplex of dimension $s+r$.

\begin{lem}
\label{lem:pyramid}
Let $F\subseteq \R^d$ be a half-open $r$-dimensional rational simplex with denominator $\delta(F)$ and let $\Delta$ be an $s$-fold pyramid over $F$ with denominator $\delta(\Delta)$. For all $g\geq1$ divisible by $\delta(\Delta)$ and all $w=\ell_1\cdots\ell_m \in R_\Delta$,
\[
(1+t^{\delta(F)}+\cdots+t^{g-\delta(F)})^{r+m+1}h^\ast_{F,w}(t)\preceq
(1+t^{\delta(\Delta)}+\cdots+t^{g-\delta(\Delta)})^{s+r+m+1}h^\ast_{\Delta,w}(t).
\]
\end{lem}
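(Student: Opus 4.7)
The plan is to apply Proposition~\ref{lem:formula} to both $\conep(F)$ and $\conep(\Delta)$ using integer cone generators rescaled to have last coordinate $g$, so that the numerators produced come with a common denominator structure built out of $(1-t^g)$. Since $\delta(F)\mid \delta(\Delta)\mid g$, the vectors $\tilde v_i:=g\cdot(v_i,1)$ and $\tilde u_j:=g\cdot(u_j,1)$ lie in $\Z^{d+1}$ and generate the half-open simplicial cones $\conep(F)$ and $\conep(\Delta)$ respectively. Specializing $z_1=\cdots=z_d=1$, $z_{d+1}=t$ in Proposition~\ref{lem:formula}, the identity $(1-t^{\delta(F)})(1+t^{\delta(F)}+\cdots+t^{g-\delta(F)})=1-t^g$ identifies the numerator produced from $\conep(F)$ with the LHS of the desired inequality, and likewise for $\conep(\Delta)$ the numerator is the RHS.

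Let $\Pi_F^g$ and $\Pi_\Delta^g$ denote the corresponding half-open parallelepipeds. The key geometric observation is that $\Pi_F^g$ embeds in $\Pi_\Delta^g$ as the slice $\mu_1(x)=\cdots=\mu_s(x)=0$ in the coordinates $x=\sum_j\mu_j(x)\tilde u_j+\sum_i\lambda_i(x)\tilde v_i$, because the pyramid construction attaches each apex $u_j$ along a closed facet, so the parallelepiped convention of Proposition~\ref{lem:formula} gives $0\le \mu_j(x)<1$. I would then isolate in the RHS formula the subsum coming from points $x\in\Pi_F^g\subseteq\Pi_\Delta^g$ together with ordered partitions $(J_1,\dots,J_s,I_1,\dots,I_{r+1})$ of $[m]$ with $J_j=\emptyset$ for all $j$. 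For these terms $\mu_j(x)=0$ and $|J_j|=0$ give $A^{\mu_j(x)}_{|J_j|}(t^g)=A^0_0(t^g)=1$, and each product $\prod_{k\in J_j}\ell_k(\tilde u_j)$ is an empty product equal to $1$. What remains matches Proposition~\ref{lem:formula}'s expression for the LHS term by term, so this distinguished subsum equals LHS exactly.

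Every remaining term in the RHS formula is a product of factors that are themselves polynomials in $t$ with nonnegative coefficients: the linear form evaluations $\ell_k(\tilde u_j),\ell_k(\tilde v_i)\ge 0$ because $w\in R_\Delta$ forces each $\ell_k$ to lie in $\conep(\Delta)^\ast$ and the generators lie in $\conep(\Delta)$; and the Eulerian factors $A^{\mu_j(x)}_{|J_j|}(t^g)$, $A^{\lambda_i(x)}_{|I_i|}(t^g)$ have nonnegative coefficients by Lemma~\ref{lem:Eulerian}, using also that $A^0_d(t^g)=t^g\,A^1_d(t^g)$ for $d\ge 1$ remains nonnegative. Hence RHS $-$ LHS has nonnegative coefficients, which is exactly the desired inequality.

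The main technical care lies in the half-open book-keeping: one must confirm that after passing to $g$-scaled generators the half-open structure inherited by $\Pi_F^g$ from $F$ (on the $v$-facets) and by $\Pi_\Delta^g$ from the pyramid construction (closed $u$-facets) matches the parallelepiped convention of Proposition~\ref{lem:formula}, so that the inclusion $\Pi_F^g\hookrightarrow \Pi_\Delta^g$ genuinely realizes LHS as a subsum of RHS. Once this is in place, the result reduces to the coefficient-by-coefficient nonnegativity check described above; the extra parameters introduced by pyramiding (the $\mu_j$ coordinates and the $J_j$ blocks of the ordered partition) contribute only nonnegative corrections on top of the embedded copy of the LHS.
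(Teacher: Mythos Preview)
Your proposal is correct and follows essentially the same route as the paper's proof: both rescale the cone generators to have last coordinate $g$, apply Proposition~\ref{lem:formula} to $\conep(F)$ and $\conep(\Delta)$, use the inclusion $\Pi_g(F)\subseteq\Pi_g(\Delta)$ via $\mu_1=\cdots=\mu_s=0$ together with the choice $J_1=\cdots=J_s=\emptyset$ to realize the LHS as a subsum of the RHS, and then invoke $w\in R_\Delta$ and Lemma~\ref{lem:Eulerian} to see that all leftover terms have nonnegative coefficients. Your explicit attention to the half-open bookkeeping (closed $u$-facets giving $0\le\mu_j<1$) is exactly the point the paper handles implicitly; the side remark about $A^0_d$ is unnecessary since Lemma~\ref{lem:Eulerian} already covers $\lambda=0$.
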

\begin{proof}
Let $v_1,\hdots,v_{r+1}\in\frac{1}{\delta(F)}\Z^d$ be vertices of $F$, labeled such that
\[
F=\left\{\lambda_1v_1+\cdots+\lambda_{r+1}v_{r+1} \mid \lambda_1,\hdots,\lambda_k\geq 0, \lambda_{k+1},\hdots,\lambda_{r+1}>0, \lambda_1+\cdots+\lambda_{r+1}=1 \right\}.
\]
Suppose $u_1,\hdots,u_s\in\frac{1}{\delta(\Delta)}\Z^d$ are such that  $\Delta=\Pyr^{(s)}(u_1,\hdots,u_s,F)$, that is, suppose
\begin{align*}
\Delta=\{ &\mu_1u_1+\cdots+\mu_su_s+\lambda_1v_1+\cdots+\lambda_{r+1}v_{r+1} \mid\\
&\mu_1,\hdots,\mu_s,\lambda_1,\hdots,\lambda_k\geq 0, \lambda_{k+1},\hdots,\lambda_{r+1}>0, \mu_1+\cdots+\mu_s+\lambda_1+\cdots+\lambda_{r+1}=1 \}.
\end{align*}

Considering the cone $\conep(F)$ with generators of last coordinate $g$ and fundamental parallelepiped
\[
\Pi_g(F)=\left\{ \lambda_1\begin{pmatrix} gv_1\\g\end{pmatrix}+\cdots+ \lambda_{r+1}\begin{pmatrix} gv_{r+1}\\g\end{pmatrix} \mid 0\leq \lambda_1,\hdots,\lambda_k<1, 0<\lambda_{k+1},\hdots,\lambda_{r+1}\leq 1\right\},
\]
we obtain by Proposition~\ref{lem:formula}
\begin{equation}
      \label{eqn:Fseries}
\Ehr(F,w;t)=\frac{\displaystyle \sum_{x\in \Pi_g(F)\cap\Z^{d+1}} t^{x_{d+1}} \sum_{\substack{
        I_1 \uplus \cdots \uplus I_{r+1} = [m]}} \prod_{i \in I_1}
    \ell_i(gv_1)\cdots\prod_{i \in
      I_{r+1}}\ell_i(gv_{r+1})\prod_{j=1}^{r+1}A_{|I_j|}^{\lambda_j(x)}(t^g)}{(1-t^g)^{r+m+1}}.
\end{equation}

Analogously, considering the cone $\conep(\Delta)$ with generators of last coordinate $g$ and fundamental parallelepiped
\begin{align*}
\Pi_g(\Delta)=\bigg\{ & \mu_1\begin{pmatrix} gu_1\\g\end{pmatrix}+\cdots+ \mu_{s}\begin{pmatrix} gu_s\\g\end{pmatrix} + \lambda_1\begin{pmatrix} gv_1\\g\end{pmatrix}+\cdots+ \lambda_{r+1}\begin{pmatrix} gv_{r+1}\\g\end{pmatrix} \mid \\
& 0\leq \mu_1,\hdots,\mu_s,\lambda_1,\hdots,\lambda_k<1, 0<\lambda_{k+1},\hdots,\lambda_{r+1}\leq 1 \bigg\},
\end{align*}
we obtain by Proposition~\ref{lem:formula} 
\begin{equation}
      \label{eqn:Deltaseries}
    \Ehr(\Delta,w;t)=
        \frac{
            \resizebox{.82\hsize}{!}{$
                \displaystyle
                   \sum_{x\in \Pi_g(\Delta)\cap\Z^{d+1}} t^{x_{d+1}} \sum_{\substack{I_1 \uplus \cdots \uplus I_{s+r+1} = [m]}} 
                    \prod_{i \in I_1} 
                    \ell_i(gv_1)\cdots\prod_{i \in I_{r+1}}\ell_i(gv_{r+1})
                    \prod_{i \in I_{r+2}}\ell_i(gu_1)\cdots
                    \prod_{i \in   I_{s+r+1}}\ell_i(gu_{s})
                    \prod_{j=1}^{s+r+1}A_{|I_j|}^{\lambda_j(x)}(t^g)$}}
                    {(1-t^g)^{s+r+m+1}}.
\end{equation}

Observe that $\Pi_g(F)\subseteq \Pi_g(\Delta)$. In particular, the points in $\Pi_g(F)$ are those in $\Pi_g(\Delta)$ with $\mu_1=\cdots=\mu_s=0$. Therefore, for every $x\in \Pi_g(F)\cap\Z^{d+1}$, each term in the inner sum of the numerator of~\eqref{eqn:Fseries} appears as a term of the numerator of~\eqref{eqn:Deltaseries} with $I_{r+2}=\cdots=I_{s+r+1}=\varnothing$ (where $\lambda_{r+1}(x)=\cdots=\lambda_{s+r+1}(x)=0$). Thus, since $w\in R_\Delta$, the nonnegativity of the remaining terms implies that
\[
(1-t^g)^{r+m+1}\Ehr(F,w;t)\preceq (1-t^g)^{s+r+m+1}\Ehr(\Delta,w;t).
\]
Recalling that the denominators of the Ehrhart series $\Ehr(F,w;t)$ and $\Ehr(\Delta,w;t)$ are $(1-t^{\delta(F)})^{r+m+1}$ and $(1-t^{\delta(\Delta)})^{s+r+m+1}$, respectively, we cancel these denominators and get the desired claim
\[
(1+t^{\delta(F)}+\cdots+t^{g-\delta(F)})^{r+m+1}h^\ast_{F,w}(t)\preceq
(1+t^{\delta(\Delta)}+\cdots+t^{g-\delta(\Delta)})^{s+r+m+1}h^\ast_{\Delta,w}(t).
\]
\end{proof}

We are now ready to prove the monotonicity theorems stated in the introduction.   Recall that $R_Q$ is the semiring consisting of sums of
products of nonnegative linear forms on $Q$. 

\begin{thm}[First Monotonicity Theorem]
    \label{thm:monotone1}
 Let $P,Q\subseteq\R^d$ be rational polytopes, $P\subseteq Q$, and let $g$ be a common multiple of the denominators $\delta (P)$ of $P$ and $\delta (Q)$ of $Q$. Then, for all weights $w\in R_Q$,
    \[
(1+t^{\delta(P)}+\cdots +t^{g-\delta (P)})^{\dim P+m+1}h^\ast _{P,w}(t) \preceq (1+t^{\delta(Q)}+\cdots +t^{g-\delta (Q)})^{\dim Q+m+1}h^\ast _{Q,w}(t) \, .
\]
    In particular, if $P \subseteq Q$ are polytopes with the same denominator, then taking $g=\delta(P)=\delta(Q)$ gives
    \begin{equation}
h^\ast _{P,w}(t) \preceq h^\ast _{Q,w}(t)
    \end{equation}
\end{thm}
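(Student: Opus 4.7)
The strategy is to mirror the half-open triangulation argument of Theorem~\ref{thm:main}(1), but applied simultaneously to the pair $P \subseteq Q$, so that each half-open simplex of $P$ is paired with a half-open simplex of $Q$ that is an iterated pyramid over it. The pointwise comparison on paired simplices will be exactly Lemma~\ref{lem:pyramid}, while leftover contributions from unpaired maximal simplices of $Q$ will be nonnegative by Theorem~\ref{thm:main}(1).

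First I would choose a triangulation $\mathcal{T}$ of $Q$, using only vertices of $P$ and $Q$ (so every simplex denominator divides $g$), whose restriction $\mathcal{T}_P := \{F \in \mathcal{T} : F \subseteq P\}$ is a triangulation of $P$. A standard construction---polyhedral subdivision of $Q$ by the supporting hyperplanes of $P$ followed by a pulling triangulation that pulls $P$'s vertices first---moreover ensures that each maximal simplex $S \in \mathcal{T}_P$ is a face of a uniquely chosen maximal simplex $T(S) \in \mathcal{T}$, with $S \mapsto T(S)$ injective and $T(S)$ an iterated pyramid of apex dimension $\dim Q - \dim P$ over $S$ with apex vertices in $Q \setminus \aff P$. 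Next I would pick compatible generic points $q_P \in \aff P$ and $q_Q \in \aff Q$ so that Theorem~\ref{thm:halfopen} gives half-open decompositions $P = \bigsqcup_S H_{q_P} S$ and $Q = \bigsqcup_T H_{q_Q} T$, arranged so that $H_{q_Q} T(S)$ is literally the pyramid over $H_{q_P} S$ in the sense of Lemma~\ref{lem:pyramid}, with apex vertices contributing nonstrict inequalities and removed facets agreeing on the shared face $S$. Applying Lemma~\ref{lem:pyramid} pair by pair and summing, the left-hand sides collect into
\[
(1 + t^{\delta(P)} + \cdots + t^{g-\delta(P)})^{\dim P + m + 1}\, h^\ast_{P,w}(t)
\]
after reduction to the common denominator $(1-t^g)^{\dim P + m + 1}$, while the right-hand sides give a partial sum that is dominated by $(1+t^{\delta(Q)}+\cdots+t^{g-\delta(Q)})^{\dim Q+m+1}h^\ast_{Q,w}(t)$ once we add back the contributions from the unmatched maximal simplices $T \in \mathcal{T}$. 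Each such $T \subseteq Q$ satisfies $R_Q \subseteq R_T$, so by Theorem~\ref{thm:main}(1) each $h^\ast_{H_{q_Q} T, w}(t)$ has nonnegative coefficients, completing the inequality.

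The main obstacle is the compatible matching of half-open structures across the dimensional gap when $\dim P < \dim Q$: one must verify that the choices of triangulation and generic points can be made jointly so that the pyramid structure of $T(S)$ over $S$ passes through to the half-open pieces, without extra or missing open facets. A technically cleaner alternative is to induct on the codimension $\dim Q - \dim P$: fix a vertex $v$ of $Q$ outside $\aff P$, form the intermediate polytope $P' := \Pyr(v, P) \subseteq Q$, apply a polytope version of Lemma~\ref{lem:pyramid} (via a triangulation of $P$ coned from $v$) to the pair $(P, P')$, then iterate to reduce to an equal-dimension comparison $(P_k, Q)$. The equal-dimension case itself requires no pyramid argument at all: one takes $q_P = q_Q$, observes that maximal simplices of $\mathcal{T}_P$ are already maximal in $\mathcal{T}$, and the inequality drops out from Theorem~\ref{thm:main}(1) applied to the extra simplices of $\mathcal{T} \setminus \mathcal{T}_P$.
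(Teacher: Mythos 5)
Your proposal is correct and follows essentially the same route as the paper: a half-open triangulation of $P$, an iterated-pyramid lift of each of its maximal simplices into $Q$ compared via Lemma~\ref{lem:pyramid}, and an extension to a half-open triangulation of all of $Q$ whose extra simplices contribute nonnegatively by the nonnegativity result. The paper resolves the compatibility issue you flag by using a single common apex sequence $u_1,\dots,u_s$ for all simplices of $P$'s triangulation and then extending by placings with the generic point chosen inside $\Pyr^{(s)}(P)$, which is a minor variant of your construction.
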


\begin{proof}
If $P$ is empty, then $h^*_{P,w}(t) = 0$, so the statement becomes part (1) of the Nonnegativity Theorem (Theorem~\ref{thm:main}) above.  Now let us assume that $P$ is nonempty.
   We can extend a half-open triangulation of $P$ to a half-open triangulation of $Q$ as follows.

   Let $T$ be a half-open triangulation of $P$ into simplices of dimension $\dim{P}$ with denominators dividing $\delta(P)$. 
   Choose $u_1,\hdots,u_{s}\in Q\cap\frac{1}{g}\Z^d$, where $s=\dim{Q}-\dim{P}$, so that for each $F\in T$ the $s$-fold pyramid $\Delta_F=\Pyr^{(s)}(u_1,\hdots,u_{s},F)\subseteq Q$ is a half-open simplex of dimension $\dim{Q}$. This is always possible by, for example, starting with a triangulation of $P$ using no new vertices and choosing $u_1,\dots,u_s$ successively from the vertices of $Q$ that do not lie on the affine hull of the previous ones together with $P$. Let $\Pyr^{(s)}(P)$ denote the union of the $\Delta_F$ which form a half-open triangulation. By Lemma~\ref{lem:pyramid}, for every $F\in T$,
    \begin{equation}
    \label{eqn:pyrineq}
    (1+t^{\delta(F)}+\cdots+t^{g-\delta(F)})^{\dim{P}+m+1}h^\ast_{F,w}(t)\preceq
    (1+t^{\delta(\Delta_F)}+\cdots+t^{g-\delta(\Delta_F)})^{\dim{Q}+m+1}h^\ast_{\Delta_F,w}(t).
    \end{equation}
    The left-hand side of~\eqref{eqn:pyrineq} is equal to $(1-t^g)^{\dim{P}+m+1}\Ehr(F,w;t)$ and the right-hand side of~\eqref{eqn:pyrineq} is equal to $(1-t^g)^{\dim{Q}+m+1}\Ehr(\Delta_F,w;t)$. Therefore, summing over all $F\in T$ yields
    \begin{equation}
    \label{eqn:P,Pyr}
            (1-t^g)^{\dim{P}+m+1}\Ehr(P,w;t)\preceq (1-t^g)^{\dim{Q}+m+1}\Ehr(\Pyr^{(s)}(P),w;t).
        \end{equation}
    Next we extend the half-open triangulation of $\Pyr^{(s)}(P)$ to a half-open triangulation $T'$ of the entire polytope $Q$. This can be done by using a sequence of pushings (or placings) of the vertices of $Q$ that are not in $P$ to extend the triangulation of $\Pyr^{(s)}(P)$ to $Q$; see page 96 and Section 4.3 of \cite{triangbook} for more details. Using a generic point in Theorem~\ref{thm:halfopen} to be in the interior of $\Pyr^{(s)}(P)$ the resulting triangulation of $Q$ becomes half-open. Each half-open simplex in $T'$ has dimension $\dim{Q}$ and denominator dividing $g$. By Proposition~\ref{prop:rational}, for each $\Delta\in T'$, $(1-t^g)^{\dim{Q}+m+1}\Ehr(\Delta,w;t)$ is a polynomial with nonnegative coefficients. Therefore,
        \begin{equation}
    \label{eqn:Pyr,Q}
    (1-t^g)^{\dim{Q}+m+1}\Ehr(\Pyr^{(s)}(P),w;t) \preceq 
    (1-t^g)^{\dim{Q}+m+1}\Ehr(Q,w;t).
    \end{equation}
From \eqref{eqn:P,Pyr} and \eqref{eqn:Pyr,Q} it follows that
    \[
    (1-t^g)^{\dim{P}+m+1}\Ehr(P,w;t)\preceq
    (1-t^g)^{\dim{Q}+m+1}\Ehr(Q,w;t).
    \]
    Equivalently,
    \[
    (1+t^{\delta(P)}+\cdots+t^{g-\delta(P)})^{\dim{P}+m+1}h^\ast_{P,w}(t)\preceq (1+t^{\delta(Q)}+\cdots+t^{g-\delta(Q)})^{\dim{Q}+m+1}h^\ast_{Q,w}(t).
    \]
\end{proof}

 \begin{thm}[Second Monotonicity Theorem]
\label{thm:monotone2}
 Let $P,Q\subseteq\R^d$ be rational polytopes of the same dimension $D=\dim P=\dim Q$, $P\subseteq Q$, and let $g$ be a common multiple of the denominators $\delta (P)$ of $P$ and $\delta (Q)$ of $Q$. Then, for all weights $w\in S_Q$,
    \[
(1+t^{\delta(P)}+\cdots +t^{g-\delta (P)})^{D+m+1}h^\ast _{P,w}(t) \leq (1+t^{\delta(Q)}+\cdots +t^{g-\delta (Q)})^{D+m+1}h^\ast _{Q,w}(t) \, 
\]
for all $t\geq 0$.
    In particular, if $P \subseteq Q$ are polytopes with the same denominator and dimension, then taking $g=\delta(P)=\delta(Q)$ gives
    \begin{equation}
        \label{eqn:monotone1}
h^\ast _{P,w}(t) \leq h^\ast _{Q,w}(t)   \text{ for all }t\geq 0.
    \end{equation}
\end{thm}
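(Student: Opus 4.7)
The plan is to combine the strategy of the First Monotonicity Theorem (Theorem~\ref{thm:monotone1}) with the hyperplane-subdivision trick used in the proof of Theorem~\ref{thm:main}(2), in order to handle the larger semiring $S_Q$. By linearity of the map $w\mapsto h^*_{\cdot,w}$ and since every generator of $S_Q$ has the form $s\cdot\ell^a$---a product of linear forms nonnegative on $Q$ (the square $s$ contributing even multiplicities)---I first reduce to the case where $w=\ell_1\cdots\ell_m$ is a single such product. A standard continuity argument, as in the proof of Theorem~\ref{thm:main}(2), lets me assume all $\ell_i$ have rational coefficients.

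The main step is to construct a rational triangulation $\mathcal{T}_Q$ of $Q$ into $D$-dimensional simplices such that (i) $P$ is a subcomplex, with restricted triangulation $\mathcal{T}_P$, and (ii) each hyperplane $\ell_i=0$ meets $Q$ in a union of faces of $\mathcal{T}_Q$. This can be achieved by first forming the polyhedral subdivision of $Q$ generated by $\partial P$ and the hyperplanes $\ell_i=0$, and then triangulating via pushings or placings as in~\cite{triangbook}. Let $s$ be a positive integer divisible by $g$ and by the denominator of every simplex of $\mathcal{T}_Q$, and pick a generic point $q$ in the interior of $P$. Because the half-opening $H_qS$ depends only on $S$ and $q$, applying Theorem~\ref{thm:halfopen} separately to $P$ and to $Q$ yields compatible decompositions $P=\biguplus_{S\in\mathcal{T}_P}H_qS$ and $Q=\biguplus_{S\in\mathcal{T}_Q}H_qS$, whence
\[
\Ehr(Q,w;t)-\Ehr(P,w;t)=\sum_{S\in\mathcal{T}_Q\setminus\mathcal{T}_P}\Ehr(H_qS,w;t).
\]

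For each $S$ in this difference, condition~(ii) ensures that each $\ell_i$ has constant sign on $S$; since $w\geq 0$ on $S\subseteq Q$, an even number of the $\ell_i$ are nonpositive on $S$, and flipping their signs (without changing $w$) exhibits $w$ as an element of $R_{H_qS}$. Theorem~\ref{thm:main}(1) now applies via Proposition~\ref{lem:formula}, which is stated directly for half-open simplicial cones, giving that $h^*_{H_qS,w}(t)$---and hence $(1-t^s)^{D+m+1}\Ehr(H_qS,w;t)$---has nonnegative coefficients. Summing over $S\in\mathcal{T}_Q\setminus\mathcal{T}_P$ yields that $(1-t^s)^{D+m+1}[\Ehr(Q,w;t)-\Ehr(P,w;t)]$ has nonnegative coefficients. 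Factoring $(1-t^s)^{D+m+1}=(1-t^g)^{D+m+1}(1+t^g+\cdots+t^{s-g})^{D+m+1}$ and dividing by the second factor (a polynomial with positive values for $t\geq 0$), we conclude that
\[
(1-t^g)^{D+m+1}[\Ehr(Q,w;t)-\Ehr(P,w;t)]\geq 0 \text{ for all } t\geq 0,
\]
which is equivalent to the desired inequality after multiplying both Ehrhart series through by their respective denominators.

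The principal technical obstacle is the construction of the triangulation $\mathcal{T}_Q$ satisfying both (i) and (ii); while standard, this step requires the most care. A secondary subtlety---that Theorem~\ref{thm:main}(1) applies equally well to half-open simplices---is immediate from the form of Proposition~\ref{lem:formula}.
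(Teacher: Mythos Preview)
Your proposal is correct. The overall architecture---reduce to a single rational product of linear forms, subdivide so that each $\ell_i$ has constant sign on every cell, flip signs to land in the $R$-semiring, apply the coefficient-wise nonnegativity result, and then divide by a polynomial positive on $[0,\infty)$---is exactly the strategy the paper uses. The difference is organizational: the paper subdivides $P$ and $Q$ by the hyperplanes $\ell_i=0$ into polytopal pieces $P_i'\subseteq Q_i'$ and invokes the First Monotonicity Theorem (Theorem~\ref{thm:monotone1}) on each pair, summing via the observation that $w$ vanishes on the overlaps; you instead build a single triangulation of $Q$ refining both $\partial P$ and the hyperplanes, pass to a half-open decomposition, and apply the half-open simplex formula (Proposition~\ref{lem:formula}) directly to the simplices in $\mathcal{T}_Q\setminus\mathcal{T}_P$. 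Your route bypasses Theorem~\ref{thm:monotone1} entirely and avoids the overlap bookkeeping (since half-open cells are disjoint), at the cost of a slightly more delicate triangulation construction; the paper's route is more modular but leans on a result of comparable strength. Both arrive at the same coefficient-wise inequality for $(1-t^s)^{D+m+1}[\Ehr(Q,w;t)-\Ehr(P,w;t)]$ before the final division step.
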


\begin{proof}
Let $w$ be a product of linear forms $\ell_1,...,\ell_m$ on the homogenization $\conep(P)$ such that $w$ is nonnegative on $P$ and $\ell_1,\dots,\ell_m$ have rational coefficients. Now, let us use the hyperplanes $\ell_1 = 0,...,\ell_m = 0$, as in the proof of Theorem \ref{thm:main} (2), to subdivide $P$ and $Q$ into rational polytopes $P_1',\ldots, P_k'$ and $Q_1',\ldots, Q_k'$, $P_i'\subseteq Q_i'$, respectively. Note, if any of these polytopes in the subdivision has dimension smaller than $D$ then it is included in one of the hyperplanes and thus its $h^\ast$-polynomial is zero. Thus, we can compute the Ehrhart series of $P$ and $Q$ by summing up the series of those subpolytopes $P_i'$s and $Q_i's$ where $\dim(P_i') = \dim(Q') = D$, and we may assume that each $P_i'$ in the subdivision of $P$ that we consider is contained in a unique polytope $Q_i'$ in the subdivision of $Q$.   
    
    As before, every linear form $\ell_i$ with $1 \leq i \leq m$ is either entirely nonpositive or entirely nonnegative on each such polytope $P_i' \subseteq Q_i'$. Hence, we can change the signs of an even number of linear forms on $P_i'$ and $Q_i'$ without changing the weight $w$ since the product of these linear forms is nonnegative. 
    
    Let $g'$ be a positive integer multiple of all the denominators of $P_i'$s and $Q_i's$ in the subdivisions that additionally is also a multiple of $g$.
    We may now apply Theorem~\ref{thm:monotone1} to all $P_i'\subseteq Q_i'$ and obtain that
    \[
    (1+t^{\delta(P_i')}+\cdots +t^{g'-\delta (P_i')})^{D+m+1}h^\ast _{P_i',w}(t) \preceq (1+t^{\delta(Q_i')}+\cdots +t^{g'-\delta (Q_i')})^{D+m+1}h^\ast _{Q_i',w}(t) \, .
    \] 
    We can rewrite this as

    \[
    (1-t^{g'})^{D+m+1} \Ehr(P_i',w;t) \preceq (1-t^{g'})^{D+m+1} \Ehr(Q_i',w;t) \, .
    \]

    Since the weight $w$ is zero on the boundaries of the subdivision given by the linear forms $\ell _1,\ldots, \ell _m$, we can add up the inequalities for all pairs of polytopes $P_i\subseteq Q_i$ obtaining the following
    \begin{equation}\label{ineq:monotonicity2}
        (1 - t^{g'})^{D+m+1} \Ehr(P,w;t) \preceq (1-t^{g'})^{D+m+1} \Ehr(Q,w;t) \, .
    \end{equation}
    The left hand side of the inequality~\eqref{ineq:monotonicity2} equals
    \[
(1+t^g+\cdots +t^{g'-g})^{D+m+1}(1+t^{\delta(P)}+\cdots +t^{g-\delta (P)})^{D+m+1}h^\ast _{P,w}(t)
    \]
    and similarly for $Q$. Thus, we obtain that the polynomial $(1+t^g+\cdots +t^{g'-g})^{D+m+1}$ multiplied with
        \begin{equation}
(1+t^{\delta(Q)}+\cdots +t^{g-\delta (Q)})^{D+m+1}h^\ast _{Q,w}(t) -(1+t^{\delta(P)}+\cdots +t^{g-\delta (P)})^{D+m+1}h^\ast _{P,w}(t)
    \end{equation}
    has only nonnegative coefficients. In particular, evaluations at $t\geq 0$ of the product are nonnegative. Since $(1+t^g+\cdots +t^{g'-g})^{D+m+1}>0$ the nonnegativity of the evaluation of the second factor at nonnegative reals follows.

For linear forms with irrational coefficients as well as for an arbitrary element of $S_P$, we can argue again by linearity and continuity of the coefficients of the $h^\ast$-polynomials as in the proof of Theorem~\ref{thm:monotone1}. 
\end{proof}

Unlike the unweighted case of Stanley~\cite{StanleyMonotonicity}
    the following example shows that the monotonicity in \eqref{eqn:monotone1} 
    need not hold when the polytopes do not have the same dimension:
\begin{exam}\label{ex:monotonicity2}   Consider $w=\ell ^2$ for $\ell (x)=2x_1+3x_2$, $v_1=(3,-2),v_2=(2,-2),v_3=(2,-1)$, $P=\conv (v_1,v_2)$, $Q=\conv (v_1,v_2,v_3)$.
    We have $\ell (v_1)=0, \ell (v_2)=-2, \ell (v_3)=1$. Both $P$ and $Q$ are unimodular simplices, thus there is only one lattice point in the fundamental parallelepiped, namely $0$. Thus, by Lemma~\ref{lem:pointcontribution} with all $\lambda _i=0$, we obtain
\[
h^\ast _{Q,w}(t)=t^2(\ell(v_1)+\ell(v_2)+\ell(v_3))^2 +t (\ell (v_1)^2+\ell (v_2)^2+\ell (v_3)^2)=t^2+5t
\]
\[
h^\ast _{P,w}(t)=t^2(\ell(v_1)+\ell(v_2))^2 +t (\ell (v_1)^2+\ell (v_2)^2)=4t^2+4t
\]
Thus, the coefficients of the $h^*$ polynomials are not monotone, and neither are the values since $h^\ast _{Q,w}(1)=6<8=h^\ast _{P,w}(1)$.
\qed
\end{exam}

\begin{rem} As was shown in Example~\ref{ex:monotonicity2}, the monotonicity in \eqref{eqn:monotone1} does not need to hold for rational polytopes $P,Q\subseteq\R^d$, $P\subseteq Q$, of different dimension. In this case, the same arguments as in the proof of Theorem~\ref{thm:monotone2} nevertheless yield the existence of an integer 
$g$ divisible by $\delta(P)$ and $\delta(Q)$ such that
\[
(1+t^{\delta(P)}+\cdots +t^{g-\delta (P)})^{\dim P+m+1}h^\ast _{P,w}(t) \leq (1+t^{\delta(Q)}+\cdots +t^{g-\delta (Q)})^{\dim Q+m+1}h^\ast _{Q,w}(t)
\]
for all $t \geq 0$ if the linear forms involved in the weight function have rational coefficients. Here we are no longer able to choose any $g$ divisible by $\delta(P)$ and $\delta(Q)$, as
the integer $g$ depends on linear forms involved.
\end{rem}

\section{Squares of arbitrary linear forms}\label{sec:2d}

In this section we focus on weights given as squares of arbitrary linear forms, not necessarily in $R_P$ and $h^\ast$-polynomials of polygons in the plane, and strengthen Theorem~\ref{thm:main} in this special case. We prove that if $P$ is a convex lattice polygon and the weight $w(x)=\ell(x)^2$ is given by a square of a linear form $\ell (x)$ then the coefficients of $h^\ast _{P,w}(t)$ are nonnegative, regardless of whether $\ell (x)$ is nonnegative on $P$ or not. This result is established in Theorem \ref{thm:polygon} below. This is a reformulation of results on the positivity of Ehrhart tensor polynomials of lattice polytopes considered in~\cite{Berg}. See Section~\ref{sec:tensors} below. Here, we present a proof that is arguably more elementary. We also present examples that show the limitations of our results if the conditions on the degree, dimension, denominator or convexity are removed.

\subsection{Lattice polygons}
We begin by providing the following more concise version of Equation (\ref{eqn:hstar}) in the case of the weight being given as a square of a linear form that holds in any dimension.

\begin{lem}\label{lem:pointcontribution}
Let $\ell: \R^d\rightarrow \R$ be a linear form. The $h^*$-polynomial $h^\ast_{\Delta,w}(t)$ with respect to the weight $w = \ell^2$ of any rational simplex $\Delta = \conv\{u_0,\ldots,u_r\}$ with denominator $q$ is given by the sum of the contributions
\begin{equation}\label{eq:contribution}
	\resizebox{.9 \textwidth}{!}{$q^2\left(\left(\sum(1-\lambda_i)\ell(u_i)\right)^2t^{2q}+\left(\sum \ell^2(u_i)+\left(\sum\ell(u_i)\right)^2
			-\left(\sum\lambda_i\ell(u_i)\right)^2-\left(\sum(1-\lambda_i)\ell(u_i)\right)^2\right)t^q+\left(\sum\lambda_i\ell(u_i)\right)^2\right)t^{x_{d+1}}$}
\end{equation}
  of each lattice point $x = \sum \lambda_i(x)(qu_i,q) \in \Pi(\Delta)\cap \Z^{d+1}$ in the fundamental parallelepiped where all summations are taken for indices $i$ from $0$ to $r$. 
\end{lem}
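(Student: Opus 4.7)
The strategy is to specialize Equation~\eqref{eqn:hstar} to the case $w=\ell^2$ and then group the resulting polynomial in $t^q$ by powers.

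I would start by taking the generators of the half-open simplicial cone $\conep(\Delta)$ to be $v_j=(qu_j,q)\in\Z^{d+1}$, so that $\ell(v_j)=q\ell(u_j)$ (extending $\ell$ to $\R^{d+1}$ by ignoring the last coordinate). With $m=2$ and $\ell_1=\ell_2=\ell$, the ordered partitions of $\{1,2\}$ into $r+1$ parts come in two types: both labels placed in a single slot $I_j$, or one label in each of two distinct slots $I_j,I_k$. Since $A_0^\lambda\equiv 1$, all Eulerian factors not attached to a nonempty $I_j$ disappear, and the inner sum in~\eqref{eqn:hstar} reduces to
\[
q^2\Bigl[\sum_j \ell(u_j)^2\,A_2^{\lambda_j(x)}(t^q)+\sum_{j\neq k}\ell(u_j)\ell(u_k)\,A_1^{\lambda_j(x)}(t^q)A_1^{\lambda_k(x)}(t^q)\Bigr].
\]

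Next, direct evaluation of the defining geometric series gives
\[
A_1^\lambda(t)=\lambda+(1-\lambda)t, \qquad A_2^\lambda(t)=\lambda^2+\bigl(1+2\lambda-2\lambda^2\bigr)t+(1-\lambda)^2 t^2.
\]
Substituting these and writing $T=t^q$ and $\lambda_j=\lambda_j(x)$, the contribution of $x$ to $h^\ast_{\Delta,w}(t)$ becomes $q^2t^{x_{d+1}}$ times a polynomial of degree at most $2$ in $T$, whose coefficients I then collect.

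Finally, the coefficient of $T^0$ is
\[
\sum_j \lambda_j^2 \ell(u_j)^2+\sum_{j\neq k}\lambda_j\lambda_k\ell(u_j)\ell(u_k)=\Bigl(\sum_j\lambda_j\ell(u_j)\Bigr)^2,
\]
and symmetrically the coefficient of $T^2$ is $\bigl(\sum_j(1-\lambda_j)\ell(u_j)\bigr)^2$, matching the outer terms in~\eqref{eq:contribution}. The coefficient of $T^1$ is
\[
\sum_j(1+2\lambda_j-2\lambda_j^2)\ell(u_j)^2+\sum_{j\neq k}(\lambda_j+\lambda_k-2\lambda_j\lambda_k)\ell(u_j)\ell(u_k),
\]
which I would reconcile with the target middle coefficient in~\eqref{eq:contribution} by expanding the four indicated squares and using the identities $\lambda_j^2+(1-\lambda_j)^2=1-2\lambda_j(1-\lambda_j)$ and $\lambda_j\lambda_k+(1-\lambda_j)(1-\lambda_k)=1-\lambda_j-\lambda_k+2\lambda_j\lambda_k$. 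This middle-coefficient identity is the only nontrivial bookkeeping; a useful sanity check is that at $T=1$ the expression collapses to $\sum_j\ell(u_j)^2+(\sum_j\ell(u_j))^2$, matching $A_2^\lambda(1)=2$ and $A_1^\lambda(1)=1$.
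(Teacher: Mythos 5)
Your proposal is correct and follows essentially the same route as the paper's proof: specialize Equation~\eqref{eqn:hstar} to $m=2$ with generators $(qu_j,q)$, plug in the explicit formulas for $A_1^\lambda$ and $A_2^\lambda$, read off the $T^0$ and $T^2$ coefficients as perfect squares, and reconcile the middle coefficient via the identity $\sum_i\ell(u_i)=\sum_i\lambda_i\ell(u_i)+\sum_i(1-\lambda_i)\ell(u_i)$. The bookkeeping you outline for the $T^1$ coefficient is exactly the computation the paper carries out.
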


\begin{proof}
If $w(x)=\ell (x)^2$ then the weight is a product of $m=2$ linear forms and the contributions of each lattice point in the fundamental parallelepiped given in Equation~\eqref{eqn:hstar} is a linear combination of products of $A_0^\lambda (t)=1$, 
	\[
		A_2^\lambda(t)=(1-\lambda)^2t^2+(1+2\lambda-2\lambda^2)t+\lambda^2 \qquad \text{and} \qquad A_1^\lambda(t)=(1-\lambda)t+\lambda
	\]
	for $0\leq \lambda \leq 1$. More precisely, we use the homogenized linear form $\ell'$ associated with $\ell$ that takes in account the scaling factor in Equation~\eqref{eqn:hstar}. Then $\ell'(qu_i,q)=q\ell'(u_i,1)=q\ell(u_i)$ and we get that the contribution of any such point $x=\sum\lambda_i (qu_i,q)$ is
	\[
		q^2\left(\sum_{0\leq i\leq r} A_2^{\lambda_i}(t^q)\ell^2(u_i)+2\sum_{0\leq i<j\leq r}A_1^{\lambda_i}(t^q)A_1^{\lambda_j}(t^q)\ell(u_i)\ell(u_j)\right) t^{x_{d+1}} \, ,
	\]
	where the first sum corresponds to the ordered partitions $[2]=I_0\uplus I_1\uplus \cdots \uplus I_r$ into $r+1$ parts where $|I_i|=2$ for some $i$ and the second sum corresponds to partitions for which $|I_i|=|I_j|=1$ for some $i\neq j$.
	
	The factor $q^2$ is present in both cases. The coefficients of $t^{2q}$ and $1$ (times $t^{x_{d+1}}$) of the polynomial above are easily seen. Indeed, the first sum contributes $\sum (1-\lambda_i)^2\ell^2(u_i)$ and the second sum contributes
	$2\sum(1-\lambda_i)(1-\lambda_j)\ell(u_i)\ell(u_i)$ to the coefficient of $t^{2q}$.  Combining these, we obtain $\left(\sum(1-\lambda_i)\ell(u_i)\right)^2$ as claimed. Analogous arguments yield the coefficient of $1$ of every contribution.
	
	A similar analysis gives that the coefficient of $t^q$ is equal to
	\[
		\begin{aligned}
			\sum_i (1+&2\lambda_i-2\lambda_i^2)\ell^2(u_i)+2\sum_{i<j}\Bigl((1-\lambda_i)\lambda_j+(1-\lambda_j)\lambda_i\Bigr)\ell(u_i)\ell(u_j)\\
					&=\sum_i (1+2\lambda_i-2\lambda_i^2)\ell^2(u_i)+2\left(\sum_i\lambda_i\ell(u_i)\right)\left(\sum_j(1-\lambda_j)\ell(u_j)\right)-2\sum_i\lambda_i(1-\lambda_1)\ell^2(u_i)\\
					&=\sum_i\ell^2(u_i)+2\left(\sum_i\lambda_i\ell(u_i)\right)\left(\sum_j(1-\lambda_j)\ell(u_j)\right).
		\end{aligned}
	\]
	By squaring both sides of the identity 
	\[
		\sum_i\ell(u_i)=\left(\sum_i\lambda_i\ell(u_i)\right)+\left(\sum_j(1-\lambda_j)\ell(u_j)\right)
	\]
	we get the claimed coefficient of $t^q$.
\end{proof}

 \begin{lem}\label{lem:halfopenunit}
 Let $\Delta\subseteq\R^2$ be a half-open triangle with vertices in $\Z^2$, let $\ell: \R^2\rightarrow \R$ be a linear form and let $w(x)=\ell ^2 (x)$. If the $h^*$-polynomial $h^\ast_{\Delta,w}(t)$ of $\Delta$ with respect to $w(x)=\ell^2(x)$ has negative coefficients then the following two conditions must both be satisfied.
 \begin{itemize}
     \item[(i)] $\Delta$ is neither completely closed nor completely open, and
     \item[(ii)] the line $\ker \ell$ intersects the relative interior of two sides of $\Delta$ that are either both ``open'' or both ``closed''.
 \end{itemize}
 \end{lem}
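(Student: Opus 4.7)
By Lemma~\ref{lem:pointcontribution} applied with $q=1$, each lattice point $x=\sum_i\lambda_i(x)(u_i,1)\in\Pi(\Delta)\cap\Z^3$ contributes the block
\[
\alpha(\lambda)^2\, t^{x_3} + M(\lambda)\, t^{x_3+1} + \beta(\lambda)^2\, t^{x_3+2}
\]
to $h^\ast_{\Delta,\ell^2}(t)$, where $\alpha=\sum_i\lambda_i\ell(u_i)$, $\beta=\sum_i(1-\lambda_i)\ell(u_i)$ and $M=\sum_i\ell(u_i)^2+2\alpha\beta$. The outer coefficients are squares, hence any negative coefficient of $h^\ast$ can only arise from the summed middle $M$-terms, and requires $\alpha(\lambda)\beta(\lambda)<0$ for some lattice point. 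I will prove the contrapositive: if either (i) or (ii) fails, all coefficients of $h^\ast_{\Delta,\ell^2}(t)$ are nonnegative.

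Suppose (ii) fails. If $\ell$ does not change sign on $\Delta$ then $\ell^2\in R_\Delta$ and Theorem~\ref{thm:main}(1) gives the claim. Otherwise $\ker\ell$ cuts $\Delta$ along a segment whose endpoints lie in the relative interiors of two sides of $\Delta$, and the negation of (ii) forces these two sides to be of different types, one open and one closed. I subdivide $\Delta$ along that segment into two rational half-open pieces on which $\ell$ has constant sign, assigning the new cut edge (which lies in $\{\ell=0\}$ and so contributes nothing to either Ehrhart series) to whichever piece makes the half-open pattern consistent---this is possible precisely because of the mismatch of types on the two cut sides. The Ehrhart series then split additively; half-open triangulating each piece and applying Theorem~\ref{thm:main}(1) yields nonnegative-coefficient $h^\ast$-polynomials, whose sum (brought to the common denominator of $\Delta$) recovers $h^\ast_{\Delta,\ell^2}$.

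Suppose (i) fails, starting with the completely closed case. The origin $\lambda=0\in\Pi$ contributes the nonnegative block $(\sum_i\ell(u_i)^2)\,t+(\sum_i\ell(u_i))^2\, t^2$. Every remaining lattice point with $x_3=2$ is forced to have all $\lambda_i\in(0,1)$, and the involution $\lambda\mapsto(1,1,1)-\lambda$ pairs it with a strictly interior $x_3=1$ lattice point, swapping $\alpha\leftrightarrow\beta$. The paired contributions sum to the palindromic block
\[
\alpha^2\, t + (M+\beta^2)\, t^2 + (M+\beta^2)\, t^3 + \alpha^2\, t^4,
\]
whose coefficients are nonnegative by the identity $M+\beta^2=\sum_i\ell(u_i)^2+(\sum_i\ell(u_i))^2-\alpha^2$ combined with $\alpha^2\leq\max_i\ell(u_i)^2\leq\sum_i\ell(u_i)^2$, where the bound on $\alpha^2$ uses that $\alpha$ is a convex combination of the $\ell(u_i)$ when $\sum_i\lambda_i=1$. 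The remaining $x_3=1$ lattice points lie in the relative interior of some edge $[u_j,u_k]$ of $\Delta$ and are paired by the edge's own reflection swapping $\lambda_j\leftrightarrow\lambda_k$; a short algebraic calculation using $\alpha+\alpha^{\mathrm{sym}}=\ell(u_j)+\ell(u_k)$ yields nonnegativity of each edge-pair block, with a single middle lattice point on an odd-length edge treated as its own self-pair. The completely open case then follows from the reciprocity $\lambda\mapsto 1-\lambda$, a bijection between the closed and open parallelepipeds that swaps $\alpha,\beta$, preserves $M$, and sends $x_3\mapsto 3-x_3$.

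The main obstacle is the edge-boundary part of the completely closed case: the strictly interior pair identity is clean, but verifying nonnegativity for each edge-pair requires a careful algebraic accounting balancing the potentially negative $2\alpha\beta$ terms against the pair's own $\alpha^2$ and $\beta^2$ squares, reducing to a one-dimensional check along the edge that uses the convex-combination structure of $\alpha$ there.
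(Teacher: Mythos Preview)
Your treatment of the case ``(i) fails'' is correct and in fact cleaner than the paper's: instead of inducting on area, you directly pair the lattice points of the fundamental parallelepiped via $\lambda\mapsto(1,1,1)-\lambda$ (interior pairs) and the edge reflection $\lambda_j\leftrightarrow\lambda_k$ (boundary pairs), and the promised ``short algebraic calculation'' for the edge pairs does go through: writing $a=\ell(u_i)$, $b=\ell(u_j)$, $c=\ell(u_k)$ and $s=\lambda_j$, one finds $M+M^{\mathrm{sym}}=2a^2+2a(b+c)+(2+4p)(b^2+c^2)+(4-8p)bc$ with $p=s(1-s)$, which is a positive-semidefinite quadratic form in $(a,b,c)$ for every $p\in(0,\tfrac14]$.

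The gap is in your ``(ii) fails'' case. When you cut $\Delta$ along $\ker\ell$, the two pieces are \emph{rational} polygons: the cut points $p,q$ are generically not lattice points. Applying Theorem~\ref{thm:main}(1) (or any half-open triangulation) to a rational piece $Q$ with denominator $q>1$ yields a nonnegative-coefficient numerator over $(1-t^{q})^{5}$, not over $(1-t)^{5}$. Summing and comparing with $h^\ast_{\Delta,\ell^2}(t)/(1-t)^5$ gives
\[
h^\ast_{\Delta,\ell^2}(t)\,(1+t+\cdots+t^{q-1})^{5}=H(t)
\]
with $H$ having nonnegative coefficients, which only yields $h^\ast_{\Delta,\ell^2}(t)\geq 0$ for $t\geq 0$, not coefficient nonnegativity (e.g.\ $(1-t+t^2)(1+t)=1+t^3$). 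This is precisely the obstruction behind part~(2) versus part~(1) of Theorem~\ref{thm:main}, and it is why the paper avoids cutting along $\ker\ell$ altogether: it instead inducts on the area of $\Delta$, subdividing at an extra \emph{lattice} point so that every sub-triangle is again a lattice half-open triangle to which the induction hypothesis applies. Your case split also omits the subcase where $\ker\ell$ passes through a vertex and the interior of the opposite side (so it meets only one side in its relative interior and (ii) fails vacuously); the same rational-denominator issue arises there.
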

 \begin{proof}
 Let $u_0,u_1,u_2$ be the vertices of $\Delta$. We argue by induction over the area of $\Delta$.

 We begin by assuming that $\Delta$ has area $\sfrac{1}{2}$, the minimal area among all triangles with vertices in the integer lattice. In this case, the half-open fundamental parallelepiped $\Pi (\Delta)$ contains exactly one lattice point $x=\lambda _0 (u_0,1)+\lambda _1 (u_1,1)+\lambda _2 (u_2,1)$ where $\lambda _0,\lambda _1,\lambda _2 \in \{0,1\}$.

 If $\Delta$ is completely closed then $\lambda _0=\lambda _1=\lambda _2=0$ and by Lemma~\ref{lem:pointcontribution},
\[
	h^\ast_{\Delta,w}(t)=\left(\sum\ell(v_i)\right)^2t^{2}+\left(\sum \ell(v_i)^2\right)t \, .
\]
 Similarly, if $\Delta$ is completely open, then $\lambda _0,\lambda _1=\lambda _2=1$ and 
\[
	h^\ast_{\Delta,w}(t)=\left(\sum\ell(u_i)\right)^2t^{3}+\left(\sum \ell(u_i)^2\right)t^4
\]
In particular, in both cases we see that the $h^\ast$-polynomial has only nonnegative coefficients. Thus, if a half-open lattice triangle has a negative coefficient condition (i) needs to be satisfied, that is, $\Delta$ is neither completely open nor closed. In this case, $\lambda_0,\lambda _1,\lambda_2$ are not all equal. 

We consider the case $\lambda _0=\lambda _1=0$ and $\lambda _2=1$. Then, by Lemma~\ref{lem:pointcontribution}, 
\begin{multline*}
	h^\ast_{\Delta,w}(t)=\\=
	\resizebox{0.95\textwidth}{!}{$(\ell(u_0)+\ell(u_1))^2t^{3}+\Bigl(\ell^2(u_0)+\ell^2(u_1)+\ell^2(u_2)+(\ell(u_0)+\ell(u_1)+\ell(u_2))^2-\ell^2(u_2)-(\ell(u_0)+\ell(u_1))^2\Bigr)t^2+\ell^2(u_2)t.$}
\end{multline*}
    The first and last coefficient are squares and thus always nonnegative. The coefficient of $t^2$ can be simplified to
\[
    (\ell(u_0)+\ell(u_2))^2+(\ell(u_1)+\ell(u_2))^2-\ell^2(u_2) \, .
\]
    We observe that if $\ell (u_2)$ has the same sign as $\ell (u_i)$, $i=0,1$, then $(\ell(u_i)+\ell(u_2))^2-\ell^2(u_2)\geq 0$ and thus the coefficient is nonnegative. It follows that $h_{\Delta,w}(t)$ can have a negative coefficient only if $\ell (u_2)$ has a different sign than both $\ell (u_0)$ and $\ell (u_1)$, that is, $\ker \ell$ separates $u_2$ from $u_0$ and $u_1$ as claimed. The case $\lambda _0=\lambda _1=1$ and $\lambda _2=0$ follows analogously. This proves the claim if $\Delta$ has minimal area.

    Now we assume that $\Delta$ has area greater than $\sfrac12$ and that the result has already been proved for all $\Delta$ of smaller area. In order to prove the claim it suffices to show that if $\Delta$ does not satisfy at least one of the conditions (i) or (ii) then it can be partitioned into half-open triangles that have $h^\ast$-polynomials with only nonnegative coefficients; then, by additivity also the $h^\ast$-polynomial of $\Delta$ is nonnegative and the proof will follow.
    
    If $\Delta$ has area greater than $\sfrac12$ then it contains at least one lattice point aside of its vertices, either in the relative interior of a side or in the interior of the triangle. By coning over the sides in which this point is not contained we obtain a subdivision into two or three smaller lattice triangles. By induction hypothesis it suffices to show that this subdivision can be made half-open in such a way that the half-open triangles in the partition do not satisfy both condition (i) and (ii).

    This is indeed always possible. In Figure~\ref{fig:quad_linear} the case of an interior lattice point and a subdivision into three smaller triangles is considered. The first row shows how to partition a completely closed triangle into smaller triangles that violate conditions (i) or (ii), depending on the position of $\ker \ell$. If $\Delta$ is completely open, then open and closed sides are flipped. The second row shows how such a partition is established in case $\Delta$ is half-open but $\ker \ell$ intersects in an open and a closed side. The non-intersected side can be removed in the case that it is excluded. 

    The case of a partition into two triangles can be treated in a similar way.
 \end{proof}

   \begin{figure}
    \centering
    \includegraphics[width=0.7\textwidth]{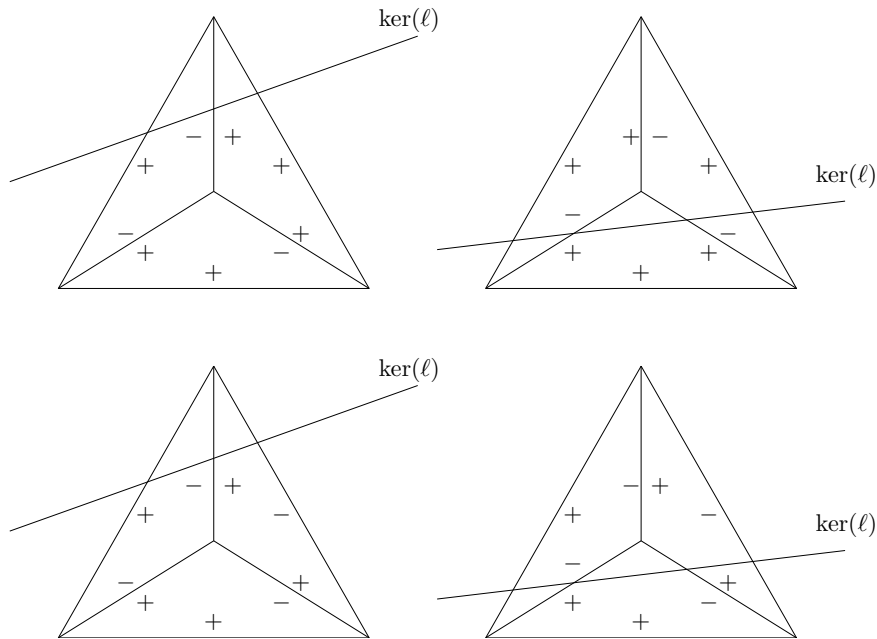}
    \caption{Subdivision of triangle using an internal integer point.
    		Each edge is marked with $+$ or $-$ to indicate which simplex includes it; the simplex containing $+$ contains the edge and the simplex containing $-$ excludes it.}
    \label{fig:quad_linear}
\end{figure}

\begin{thm}\label{thm:polygon}
For every (closed) convex lattice polygon $P$ and every linear form $\ell$, the $h^*$-polynomial of $P$ with respect to $w(x)=\ell^2(x)$ has only non-negative coefficients.
\end{thm}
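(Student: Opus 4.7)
The plan is to prove the theorem by induction on the number of vertices $n$ of $P$, with a preliminary case.

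For any $n$, if $\ker\ell$ does not meet the interior of $P$, then $\ell$ has constant sign on $P$, so, up to replacing $\ell$ by $-\ell$, the form $\ell$ lies in $\conep(P)^*$. Then $w=\ell^2\in R_P$, and Theorem~\ref{thm:main}(1) delivers the nonnegativity of the coefficients of $h^*_{P,w}(t)$ directly. If instead $\ker\ell$ crosses $\mathrm{int}(P)$ and $n=3$, then $P$ itself is a closed lattice triangle with all three sides included, so condition~(i) of Lemma~\ref{lem:halfopenunit} fails and we are done. For the inductive step $n\geq 4$ with $\ker\ell$ crossing $\mathrm{int}(P)$, I would peel off a lattice triangle at a carefully chosen vertex $v_i$: with $v_{i-1},v_{i+1}$ the cyclic neighbors of $v_i$, set $\Delta=\conv(v_{i-1},v_i,v_{i+1})$, $P'=\conv(V(P)\setminus\{v_i\})$, and let $\Delta^\circ=\Delta\setminus\overline{v_{i-1}v_{i+1}}$. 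The convexity of $P$ gives the half-open decomposition $P=P'\uplus\Delta^\circ$, so
\[
h^*_{P,w}(t) \;=\; h^*_{P',w}(t) \;+\; h^*_{\Delta^\circ,w}(t).
\]
The first summand has nonnegative coefficients by the inductive hypothesis applied to the convex lattice $(n{-}1)$-gon $P'$. For the second, Lemma~\ref{lem:halfopenunit} applies: $\Delta^\circ$ has one open and two closed sides, so condition~(i) holds, and $h^*_{\Delta^\circ,w}(t)$ has nonnegative coefficients unless $\ker\ell$ crosses the relative interiors of both closed sides $\overline{v_iv_{i-1}}$ and $\overline{v_iv_{i+1}}$, i.e.\ unless $\ell(v_i)$ has opposite sign from both $\ell(v_{i-1})$ and $\ell(v_{i+1})$.

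The critical step is therefore to choose $v_i$ avoiding this bad configuration. Since $P$ is convex, $\ker\ell$ meets $\partial P$ in at most two points, so the cyclic sign sequence $\mathrm{sign}(\ell(v_1)),\ldots,\mathrm{sign}(\ell(v_n))$ has at most two sign changes and partitions the vertex set into at most two contiguous runs of equal sign. If both runs have size at least two, any vertex in the interior of a run has a neighbor of the same sign and is safe to cut. If one run is the singleton $\{v_j\}$, the other run has $n-1\geq 3$ vertices and its interior contains a vertex $v_i$ with both neighbors of the same sign, which is safe to cut. Any vertex with $\ell(v_i)=0$ is automatically safe, since $\ker\ell$ then meets the two closed sides of $\Delta^\circ$ only at the point $v_i$ itself, not in their relative interiors.

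The main technical subtlety I anticipate is in handling degenerate configurations where $\ker\ell$ contains an entire edge of $P$ or passes through several vertices; these should be absorbed into the sign-run argument by treating zero-sign vertices as safe and using the fact that $\ell^2$ vanishes identically on any edge contained in $\ker\ell$, so such edges contribute nothing to the Ehrhart sum. Otherwise the argument is a clean induction on $n$ that invokes only Theorem~\ref{thm:main}(1) and Lemma~\ref{lem:halfopenunit}.
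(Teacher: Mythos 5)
Your proof is correct, and it reaches the result by a genuinely different decomposition than the paper's, though both rest on the same two ingredients: Theorem~\ref{thm:main}(1) for the case where $\ker\ell$ misses the interior of $P$, and Lemma~\ref{lem:halfopenunit} to certify individual half-open triangles. The paper splits into cases according to how the chord $\ker\ell\cap P$ meets $\partial P$ (two vertices; a vertex and the relative interior of an edge; the relative interiors of two edges) and exhibits, for each case, an explicit global half-open triangulation all of whose cells violate condition (i) or (ii) of the lemma. You instead induct on the number of vertices, peeling off the half-open ear $\Delta^\circ$ at a vertex $v_i$ that $\ker\ell$ does not strictly separate from both of its neighbours; convexity forces the cyclic sign sequence of $\ell$ on the vertices to consist of two contiguous nonzero runs plus at most two zero vertices, which guarantees a safe $v_i$ exists once $n\geq 4$, and the closed base case $n=3$ is immediate from the failure of condition (i). Your route avoids the figure-based case analysis and the separate treatment of the two-vertex case (which the paper handles by cutting $P$ along $\ker\ell$ and applying Theorem~\ref{thm:main} to each closed half, using that $w$ vanishes on $\ker\ell$), at the cost of not producing the triangulation explicitly in one step. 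One point worth stating explicitly in your write-up: a vertex is ``safe'' as soon as just one of its two neighbours fails to have strictly opposite sign, because condition (ii) requires $\ker\ell$ to cross the relative interiors of \emph{both} closed edges at $v_i$; this is exactly what makes endpoints of runs of length at least two, and neighbours of zero vertices, acceptable cuts, and it closes the small gap left by your phrase ``interior of a run.''
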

\begin{proof}
If $\ker \ell$ does not intersect the interior of $P$, then the statement follows from Theorem~\ref{thm:main}. Otherwise, $\ker \ell$ intersects the boundary of $P$ twice: either in two vertices, or in a vertex and the interior of a side, or the interior of two sides. 

If $\ker \ell$ intersects the boundary of $P$ in two vertices, then the $h^\ast$-polynomial of $P$ is the sum of the $h^\ast$-polynomials of the two (closed) lattice polygons $\ker \ell$ divides $P$ into. This is because lattice points in $\ker \ell$ are weighted with $0$. The $h^\ast$-polynomial of both lattice polygons in the subdivision have only nonnegative coefficients by Theorem~\ref{thm:main} and so does their sum.

In the other two cases, if $\ker \ell$ intersects in a vertex and the interior of a side, or in the interior of two sides, the polygon can be subdivided into half-open triangles that do not satisfy the conditions (i) and (ii) in Lemma~\ref{lem:halfopenunit} as depicted in Figure~\ref{fig:polygonthm}: if the convex hull of the corresponding vertex and side/the two sides is a triangle, we take this closed triangle and extend it to a half-open triangulation as shown in the picture; if the convex hull of the two intersected sides i a quadrilateral, we partition this quadrilateral into a closed triangle and a half-open one along its diagonal; the rest of the polygon is again subdivided into half-open triangles that do not intersect $\ker \ell$, as depicted.

In all cases, the half-open triangles used in the half-open triangulation violate the conditions given in Lemma~\ref{lem:halfopenunit}. Thus their $h^\ast$-polynomial have only nonnegative coefficients and so does their sum. 
\end{proof}

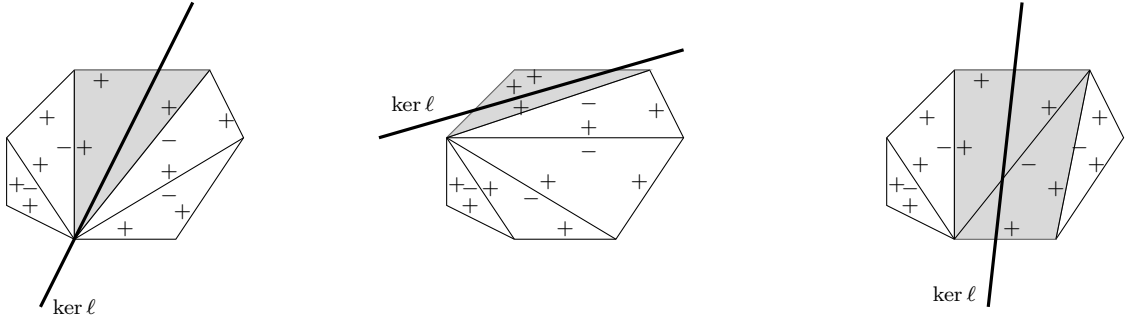
\begin{figure}
\begin{tikzpicture}[scale=0.45]
   \draw (2,0)--(5,0)--(7,3)--(6,5)--(2,5)--(0,3)--(0,1)--(2,0);
        \draw (15,0)--(18,0)--(20,3)--(19,5)--(15,5)--(13,3)--(13,1)--(15,0);
        \draw (28,0)--(31,0)--(33,3)--(32,5)--(28,5)--(26,3)--(26,1)--(28,0);

        \filldraw[draw={gray}, fill=gray!30] (2,0) -- (6,5) --(2,5)--(2,0);
        \filldraw[draw={gray}, fill=gray!30] (13,3) -- (15,5) --(19,5)--(13,3);
         \filldraw[draw={gray}, fill=gray!30] (28,0) -- (31,0) --(32,5)--(28,5)--(28,0);

\draw (2,0)--(0,3);
\draw (2,0)--(2,5);
\draw (2,0)--(6,5);
\draw (2,0)--(7,3);

\draw[very thick] (1,-2)--(5.5,7);

\draw (13,3)--(19,5);
\draw (13,3)--(20,3);
\draw (13,3)--(18,0);
\draw (13,3)--(15,0);

\draw[very thick] (11,3)--(20,5.6);

\draw (26,3)--(28,0);
\draw (28,5)--(28,0);
\draw (28,0)--(32,5);
\draw (31,0)--(32,5);

\draw[very thick] (29,-2)--(30,7);

\draw (3.5,0.3) node {\tiny $+$};
\draw (5.2,0.8) node {\tiny $+$};
\draw (4.8,1.3) node {\tiny $-$};
\draw (4.8,2) node {\tiny $+$};
\draw (6.5,3.5) node {\tiny $+$};
\draw (4.8,2.9) node {\tiny $-$};
\draw (4.8,3.9) node {\tiny $+$};
\draw (2.8,4.7) node {\tiny $+$};
\draw (2.3,2.7) node {\tiny $+$};
\draw (1.7,2.7) node {\tiny $-$};
\draw (1.2,3.6) node {\tiny $+$};
\draw (1,2.2) node {\tiny $+$};
\draw (0.7,1.5) node {\tiny $-$};
\draw (0.7,1) node {\tiny $+$};
\draw (0.3,1.6) node {\tiny $+$};

\draw (2,-2) node {\tiny $\ker \ell$};

\draw (13.7,1.5) node {\tiny $-$};
\draw (13.7,1) node {\tiny $+$};
\draw (13.3,1.6) node {\tiny $+$};
\draw (16.5,0.3) node {\tiny $+$};
\draw (14.3,1.5) node {\tiny $+$};
\draw (15.5,1.2) node {\tiny $-$};
\draw (16,1.7) node {\tiny $+$};
\draw (18.7,1.7) node {\tiny $+$};
\draw (17.2,2.6) node {\tiny $-$};
\draw (17.2,3.3) node {\tiny $+$};
\draw (19.2,3.8) node {\tiny $+$};
\draw (17.2,4) node {\tiny $-$};
\draw (15.2,3.9) node {\tiny $+$};
\draw (15.6,4.8) node {\tiny $+$};
\draw (15,4.5) node {\tiny $+$};

\draw (12,4) node {\tiny $\ker \ell$};

\draw (30.8,3.9) node {\tiny $+$};
\draw (28.8,4.7) node {\tiny $+$};
\draw (28.3,2.7) node {\tiny $+$};
\draw (27.7,2.7) node {\tiny $-$};
\draw (27.2,3.6) node {\tiny $+$};
\draw (27,2.2) node {\tiny $+$};
\draw (26.7,1.5) node {\tiny $-$};
\draw (26.7,1) node {\tiny $+$};
\draw (26.3,1.6) node {\tiny $+$};
\draw (29.7,0.3) node {\tiny $+$};
\draw (31,1.5) node {\tiny $+$};
\draw (30.2,2.2) node {\tiny $-$};
\draw (31.7,2.7) node {\tiny $-$};
\draw (32.2,2.2) node {\tiny $+$};
\draw (32.3,3.6) node {\tiny $+$};

\draw (28,-1.6) node {\tiny $\ker \ell$};

\end{tikzpicture}
    \caption{Half-open triangulations of a polygon in the cases where $\ker \ell$ intersects the boundary of the polygon in a vertex and the interior of a side (left) or two sides (middle/right). Removed/open faces are denoted by ``$-$'', closed/non-removed faces with ``+''. The convex hull of the corresponding vertex/sides is depicted in gray. All half-open triangles violate conditions (i) and (ii) of Lemma~\ref{lem:halfopenunit}.}
    \label{fig:polygonthm}
\end{figure}

\subsection{Negative examples}
\label{sec:negative}
In this section we provide examples that show that most assumptions in Theorem~\ref{thm:polygon} are necessary and cannot be further relaxed. Our examples are explicit and can be computed either by applying Equation (\ref{eqn:hstar}) and/or by using {\tt LattE} (\cite{LattE}). 

We begin with an example that shows that the nonnegativity of the $h^\ast$-polynomial for lattice polygons does not extend to weight functions that are squares of degree higher than $2$.

\begin{exam}\label{exam:2d-negative}
Let $w(x)=(2x_1-x_2)^2(2x_2-x_1)^2$ and $P$ be the standard triangle with vertices $v_0=(0,0), v_1= (1,0)$, and  $v_2 = (0,1)$. Then 
\[
	h^*_{P,w}(t)=t(8+81t-6t^2+t^3).
\]
\end{exam}

While the classical Ehrhart theory deals with convex polytopes, in the two-dimensional case, Stanley's nonnegativity theorem and our Theorem \ref{thm:main} can be extended to non-convex polygons without holes as any such polygon can be dissected into (half-open) triangles. Next we give an example of a non-convex quadrilateral and weight given by a square of a linear form that shows that Theorem~\ref{thm:polygon} does not extend to non-convex quadrilaterals.

\begin{exam}\label{exam:non-convex}
Let $w(x)=\ell (x)^2$ where $\ell(x)=x_1$ and $P=v_0v_1v_2v_3$ be the non-convex quadrilateral with vertices $v_0=(1,0),v_1=(-3,-1),v_2=(2,0),v_3=(-3,1)$ as depicted in Figure~\ref{fig:b}. Then
\[
	h^*_{P,w}(t)=t(23-4t+9t^2).
\]
\end{exam}

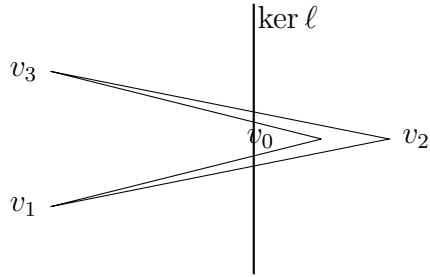
\begin{figure}[H]
        \begin{tikzpicture}[scale=0.9]
\draw (1,0) -- (-3,-1) --  (2,0) -- (-3,1) -- (1,0);
\draw[thick] (0,-2)--(0,2);
\node (v1) at (0.1,0) {$v_0$};
\node (v2) at (-3.4,-1) {$v_1$};
\node (v3) at (2.4,0) {$v_2$};
\node (v4) at (-3.4,1) {$v_3$};
\node (ell) at (0.5,1.8) {$\ker \ell$};
    \end{tikzpicture}
    \caption{Example of a non-convex lattice quadrilateral that has an $h^\ast$-polynomial with negative coefficients with respect to a weight $w(x)=x_1^2$.}
    \label{fig:b}
\end{figure}

Next, we note that Theorem \ref{thm:polygon} does not hold for rational polygons, not even in the case of ``primitive'' triangles as illustrated in the next example.

\begin{exam}
For any integer $q\geq 1$, let $\Delta _q \subseteq \mathbb{R}^2$ be the rational triangle with vertices 
\[
u_0=(1,1) \, , u_1=\left(1,\frac{q-1}{q}\right) \, \text{ and } u_2=\left(\frac{q+1}{q},1\right)
\]that has denominator $q$. Let $\ell _q\colon \mathbb{R}^2\rightarrow \mathbb{R}$ be the linear form defined by $\ell _q (x)=2q(1-q)x_1+q(2q-1)x_2$. Then
\begin{eqnarray*}
    \ell _q (u_0) &=& q \\
    \ell _q (u_1) &=& 1-q \\
    \ell _q (u_2) &=& 2-q \, .
\end{eqnarray*}

The half-open fundamental parallelepiped spanned by $(qu_0,q),(qu_1,q),(qu_2,q)$ contains exactly $q$ lattice points, namely
\[
y_i \ = \ (i,i,i) \quad \text{ for all }0\leq i\leq q-1 \, .
\]

By Lemma~\ref{lem:pointcontribution} we see that every non-zero coefficient of the $h^\ast$-polynomial of $\Delta$ with respect to $w_q(x)=\ell _q (x)^2$ arises from the contribution of exactly one of the $y_i$s, namely $y_i$ contributes to the coefficient of $t^j$ if and only if $j \equiv i \mod q$. Thus, $h^\ast _{\Delta _q,w_q}(t)$ has a negative coefficient if and only if the contribution of one of the lattice points in the half-open parallelepiped has a negative coefficient.

We focus on 
\[
y_{q-1}=(q-1,q-1,q-1)=\frac{q-1}{q}(qu_0,q)+0\cdot (qu_1,q)+0\cdot (qu_2,q).
\]
By Lemma~\ref{lem:pointcontribution}, the second term in the contribution of $y_{q-1}$, and therefore the coefficient of $t^{2q-1}$, is equal to $q^2$ times
\begin{eqnarray*}
    &q^2+(1-q)^2+(2-q)^2+(q+(1-q)+(2-q))^2-\left( \frac{q-1}{q}q\right)^2-\left(\frac{1}{q}q+(1-q)+(2-q) \right)^2\\
\end{eqnarray*}
which is equal to $-q^4+6q^3-3q^2$. This evaluates to a negative number for all integers $q\geq 6$. As a consequence, the $h^\ast$-polynomial of $\Delta _q$ with respect to the weight $w_q(x)=\ell _q (x)^2$ has a negative coefficient in front of $t^{2q-1}$ for all integers $q\geq 6$. For example, if $q=6$ then $h^\ast _{\Delta _q,w_q}(t)$ equals
\begin{eqnarray*}
    &&2304 t^{17} + 1764 t^{16} + 1296 t^{15} + 900 t^{14} + 576 t^{13} + 324 t^{12}- 108 t^{11} + 756 t^{10}\\ && + 1476 t^9 + 2052 t^8 + 2484 t^7 + 2772 t^6 + 900 t^5 + 576 t^4 + 324 t^3 + 144 t^2 + 36 t
\end{eqnarray*}
\end{exam}

Last but not least, we show that the assumption on the dimension cannot be removed in Theorem~\ref{thm:polygon} by providing an example of a $20$-dimensional lattice simplex $P$ and a linear form such that $h^\ast_{P,w}(x)$ has a negative coefficient where $w(x)=\ell(x)^2$. This also establishes a counterexample to a conjecture of Berg, Jochemko, Silverstein~\cite{Berg}, see Section~\ref{sec:tensors} below for details.
\begin{exam}
\label{ex:negative}
    We consider the $19$-dimensional simplex $\Delta=\conv\{u_0,\ldots,u_{19}\}$ where $u_0$ is the origin, $u_1,\ldots,u_{18}$ are the standard basis vectors $e_1,\ldots,e_{18}$ and 
    \[
	\begin{aligned}
		u_{19} &= (1,1,1,1,1,1,1,1,1,-1,-1,-1,-1,-1,-1,-1,-1,-1,3) \\
				&= 3e_{19}+e_1+\ldots+e_9-e_{10}-\ldots-e_{18}.
	\end{aligned}
    \]
    and the pyramid $\Delta ' = \conv \left(0\cup \Delta \times 1\right)\in \mathbb{R}^{20}$ which is a $20$-dimensional simplex with vertices $0$ and $v_i:=(u_i,1)$, $0\leq i\leq 19$. Let $\ell : \R^{20}\rightarrow \R$ be the linear functional defined by
    \[
\ell (v_i) = \begin{cases} 1 & \text{ if } 0\leq i\leq 9\\
-1 & \text{ if } 10\leq i \leq 19 \, .

\end{cases}
    \]
    We claim that the $h^\ast$-polynomial of $\Delta '$ with respect to $w(x)=\ell (x)^2$ has a negative coefficient in front of~$t^{11}$.

    To see this, we observe that the determinant of the matrix with columns $v_i$, $0\leq i\leq 19$ equals $-3$, that is, the normalized volume of $\Delta '$ is $3$ and the half-open fundamental parallelepiped $\Pi (\Delta ')$ contains exactly three lattice points. Those are $y_0=0$,
    \begin{eqnarray*}
y_1 \ &=& \ \frac{2}{3}\sum _{i=0}^9 (v_i,1)+\frac{1}{3}\sum _{i=10}^{19} (v_i,1) \ = \ (1,1,1,1,1,1,1,1,1,0,0,0,0,0,0,0,0,0,1,10,10) \, ,\\
y_2 \ &=& \ \frac{1}{3}\sum _{i=0}^9 (v_i,1)+\frac{2}{3}\sum _{i=10}^{19} (v_i,1) \ = \ (1,1,1,1,1,1,1,1,1,0,0,0,0,0,0,0,0,0,2,10,10) \, .
    \end{eqnarray*}

    By Lemma~\ref{lem:pointcontribution}, the coefficient of $t^{11}$ in the contribution of $y_j$, $j=1,2$, equals
    \[
\sum \ell^2(v_i)+(\sum \ell (v_i))^2-(\sum \lambda _i \ell (v_i))^2-(\sum (1-\lambda _i)\ell (v_i))^2
    \]
    where $\lambda _0=\cdots =\lambda _9=2/3$ and $\lambda _{10}=\cdots =\lambda _{19}=1/3$ for $y_1$, and for $y_2$ the values are flipped. In both cases, the term evaluates to 
    \[
    20+(0)^2-\left(\frac{2}{3}\cdot 10+\frac{1}{3}\cdot (-10)\right)^2-\left(\frac{1}{3}\cdot 10+\frac{2}{3}\cdot (-10)\right)^2 = \frac{-20}{9} \, .
    \]
    Note that $y_0=0$ does not contribute to the $t^{11}$-coefficient of the $h^\ast$-polynomial. In summary, the coefficient of $t^{11}$ equals $2\cdot \frac{-20}{9}<0$ and is thus negative.
\end{exam}

\section{Ehrhart tensor polynomials}\label{sec:tensors}
In this section we discuss the results of the previous section in relation to results and conjecture on Ehrhart tensor polynomials which were introduced by Ludwig and Silverstein~\cite{Ludwig}. 

For any integer $r\in \N$, let $\Te^r$ be the vector space of symmetric tensors of rank $r$ on $\R^d$. The \emph{discrete moment tensor} of rank $r$ of a  lattice polytope $P\subset \R^d$ is defined as
\[
L^r(P) \ = \ \sum _{x\in P\cap \Z^d} x^{\otimes r} \, ,
\]
where $x^{\otimes r}=x\otimes \cdots \otimes x$ and $x^{\otimes 0}:=1$. Discrete moment tensors were introduced by B\"or\"oczky and Ludwig~\cite{BL17}. Note that for $r=0$ we recover the number of lattice points in $P$, $|P\cap \Z^d|$. Ludwig and Silverstein~\cite[Theorem 1]{Ludwig} showed that there exist maps $L_i^r$, $0\leq i\leq d+1$, from the family of lattice polytopes to $\Te^r$ such that
\[
L^r(nP)=\sum _{i=0}^{d+r}L_i^r(P)n^i
\]
for all integers $n\geq 0$, that is, the discrete moment tensor $L^r(nP)$ is given by a polynomial in the nonnegative integer dilation factor. The polynomial is called the \emph{Ehrhart tensor polynomial}. Equivalently, if $P$ is a $d$-dimensional lattice polytope,
\[
\sum _{n\geq 0}L^r(nP)t^n \ = \ \frac{h_0^r(P)+h_1^r(P)t+\cdots + h_{d+r}^r(P)t^{r+d}}{(1-t)^{d+r+1}}
\]
for tensors $h_0^r(P),h_1^r(P),\ldots, h_{r+d}^r(P)\in \Te^r$. The numerator polynomial is called the \emph{$h^r$-tensor polynomial} of $P$~\cite{Berg}. 
Observe that for $r=0$ we recover the usual Ehrhart and $h^\ast$-polynomial 
of a lattice polytope.

The vector space of symmetric tensors $\Te ^r$ is isomorphic to the vector 
space of multi-linear functionals $(\R^d)^r\rightarrow \R$ that are invariant under permutations of the arguments. In particular, for any $v_1,\ldots, v_r\in \R^d$,
\[
L^r(P)(v_1,\ldots, v_r) \ = \ \sum _{x\in P\cap \Z^d}(x^T v_1)\cdots (x^T v_r) \, .
\]
Thus, weighted Ehrhart polynomials can be seen as evaluations of Ehrhart tensor polynomials in the following sense.
\begin{prop}\label{prop:identification}
Let $w(x)=\ell _1 (x)\cdots \ell _r(x)$ be a product of linear forms where each 
linear form $\ell _i : \R^d \rightarrow \R$ is given by $\ell _i (x)=x^T v_i$ 
for some $v_i\in \R^d$. Let $P$ be a $d$-dimensional lattice polytope. Then
\[
\ehr(nP,w) \ = \ \sum _{i=0}^{d+r}L^r(P)(v_1,\ldots, v_r)n^i 
\]
and, equivalently,
\[
h^\ast _{P,w}(t) \ = \ \sum _{i=0}^{d+r}h_i^r (P) (v_1,\ldots, v_r)t^i \, .
\]
\end{prop}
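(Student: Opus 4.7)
The plan hinges on a single bookkeeping observation: for any $x\in\R^d$, the evaluation of the rank-$r$ symmetric tensor $x^{\otimes r}$ at $(v_1,\dots,v_r)$ recovers the weight,
\[
x^{\otimes r}(v_1,\dots,v_r) \ = \ (x^T v_1)(x^T v_2)\cdots(x^T v_r) \ = \ \ell_1(x)\cdots \ell_r(x) \ = \ w(x).
\]
This is the one identity doing the work; everything else is linearity.

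First I would sum this identity over $x\in nP\cap\Z^d$, pulling the evaluation at $(v_1,\dots,v_r)$ outside the sum since it is a linear functional on $\Te^r$, to get
\[
\ehr(nP,w) \ = \ \sum_{x\in nP\cap\Z^d} x^{\otimes r}(v_1,\dots,v_r) \ = \ L^r(nP)(v_1,\dots,v_r).
\]
Applying the Ludwig--Silverstein polynomiality $L^r(nP)=\sum_{i=0}^{d+r} L_i^r(P)\,n^i$ in $\Te^r$ and evaluating termwise at $(v_1,\dots,v_r)$ then yields the first identity (reading $L_i^r$ for what appears as a typographic $L^r$ on the right-hand side of the stated formula).

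For the $h^*$ identity I would multiply by $t^n$, sum over $n\geq 0$, and once more pull the evaluation through the (formal) series:
\[
\Ehr(P,w;t) \ = \ \sum_{n\geq 0} L^r(nP)(v_1,\dots,v_r)\, t^n \ = \ \left(\sum_{n\geq 0} L^r(nP)\, t^n\right)\!(v_1,\dots,v_r).
\]
Since $P$ is a lattice polytope we have $q=1$ in Proposition~\ref{prop:rational} and $\deg w = r$, so $\Ehr(P,w;t)=h^*_{P,w}(t)/(1-t)^{d+r+1}$; the tensor generating series on the right has the same denominator $(1-t)^{d+r+1}$ with numerator $\sum_{i=0}^{d+r} h_i^r(P)\,t^i$. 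Comparing numerators after evaluation at $(v_1,\dots,v_r)$ gives the second identity. I do not anticipate a substantive obstacle: the proposition is essentially a repackaging of the definition of the symmetric tensor power as a multilinear functional, combined with the linearity of lattice-point summation and the Ludwig--Silverstein polynomiality theorem. The only item worth verifying carefully is that the denominator exponents match, which they do precisely because both the weighted Ehrhart series (with $q=1$, $m=r$) and the $h^r$-tensor series carry the common denominator $(1-t)^{d+r+1}$.
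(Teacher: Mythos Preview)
Your proposal is correct and follows essentially the same approach as the paper: evaluate $x^{\otimes r}$ at $(v_1,\dots,v_r)$ to recover $w(x)$, sum over lattice points, and then invoke the Ludwig--Silverstein polynomiality and linearity. Your version is more detailed (the paper dispatches the $h^\ast$ part with ``follows similarly''), and your observation that the displayed $L^r(P)$ on the right-hand side of the first identity should read $L_i^r(P)$ is correct.
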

\begin{proof}
    For any integer $n\geq 0$,
    \[
    \ehr(nP,w) =\sum _{x\in nP\cap Z^d}x^Tv_1 \cdots x^Tv_r= L^r(nP)(v_1,\ldots, v_r)=\sum _{i=0}^{d+r}L_i^r(nP)(v_1,\ldots, v_r)n^i \, .
    \]
    The claim for the $h^\ast$-polynomials follows similarly.
\end{proof}
In the case that $r=2$, symmetric tensors can be identified with symmetric matrices via their values on pairs of standard vectors. Via this identification, a tensor is called positive semi-definite if the corresponding matrix is positive semi-definite. In particular, $L^2(P)=\sum _{x\in P\cap Z^d}x x^T$ is always positive semi-definite. However, the coefficients of the Ehrhart tensor polynomial and the $h^2$-tensor polynomial need not be in general~\cite{Berg}, similarly as the coefficients of the usual Ehrhart polynomial are not positive in general. The following relation between the positivity of weighted $h^\ast$-polynomials and the positive semi-definiteness of the coefficients of the $h^2$-tensor polynomial is a consequence of Proposition~\ref{prop:identification}.

\begin{prop}\label{prop:positivesemidefinite}
    For any lattice polytope $P\subset \R^d$, the $h^2$-tensor polynomial of $P$ has only positive semi-definite coefficients if and only if $h^\ast _{P,w}(t)$ has only nonnegative coefficients for each weight that is a square of a linear form $w(x)=\ell^2 (x)$.
\end{prop}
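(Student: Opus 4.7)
The plan is to derive this equivalence as a direct corollary of Proposition~\ref{prop:identification} combined with the classical characterization of positive semi-definiteness in terms of the associated quadratic form.

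First, I would observe that every linear form $\ell:\R^d\to\R$ can be written uniquely as $\ell(x) = x^T v$ for some $v \in \R^d$, so that $w(x) = \ell(x)^2$ is a product of two linear forms both corresponding to this same $v$. Applying Proposition~\ref{prop:identification} with $r=2$ and $v_1 = v_2 = v$ then yields
\[
h^\ast_{P,\ell^2}(t) \;=\; \sum_{i=0}^{d+2} h_i^2(P)(v,v)\, t^i,
\]
so the $i$-th coefficient of $h^\ast_{P,\ell^2}(t)$ is precisely the value $h_i^2(P)(v,v)$ of the symmetric rank-$2$ tensor $h_i^2(P)$ on the diagonal pair $(v,v)$.

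Next, I would invoke the standard fact that, under the identification of symmetric rank-$2$ tensors on $\R^d$ with symmetric $d\times d$ matrices used in the paper, a tensor $T \in \Te^2$ is positive semi-definite if and only if $T(v,v)\geq 0$ for every $v \in \R^d$. Granted this, both directions of the claimed equivalence become immediate. If every coefficient $h_i^2(P)$ is positive semi-definite, then $h_i^2(P)(v,v)\geq 0$ for all $v$ and all $i$, so every coefficient of $h^\ast_{P,\ell^2}(t)$ is nonnegative for every linear form $\ell$. Conversely, if $h^\ast_{P,\ell^2}(t)$ has only nonnegative coefficients for every linear form $\ell$, then letting $v$ range over $\R^d$ (which corresponds to $\ell$ ranging over all linear forms) gives $h_i^2(P)(v,v)\geq 0$ for all $v\in\R^d$ and all $i$, and hence each $h_i^2(P)$ is positive semi-definite.

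The proof carries essentially no technical obstacle once Proposition~\ref{prop:identification} is in hand; the only point requiring a moment of care is the bijection between linear forms $\ell$ on $\R^d$ and vectors $v\in\R^d$ via $\ell(x)=x^T v$, which guarantees that quantifying over all linear forms is the same as quantifying over all $v\in\R^d$. The proposition is therefore best viewed not as a result requiring new ideas but as a direct translation between two equivalent formulations of nonnegativity, one tensorial and one polynomial.
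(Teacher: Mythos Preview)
Your proposal is correct and follows essentially the same approach as the paper's proof: both apply Proposition~\ref{prop:identification} with $r=2$ and $v_1=v_2=v$ to identify the $i$-th coefficient of $h^\ast_{P,\ell^2}(t)$ with $h_i^2(P)(v,v)$, and then invoke the definition of positive semi-definiteness. Your write-up is simply a bit more explicit about the bijection between linear forms and vectors and about the two directions of the equivalence.
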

\begin{proof}
    Let $M_i=h_i^2(P)\in \R^{2\times 2}$ be the coefficients of the $h^2$-polynomial of $P$. By Proposition~\ref{prop:identification}, for any linear form $\ell (x)=v^Tx$ on $\R^d$, $h^\ast _{P,w}(t)=\sum _{i} v^TM_iv t^i$. Thus, $h^\ast _{P,w}(t)$ has only nonnegative coefficients for all weights $w(x)=\ell (x)^2$ if and only if the matrices $M_i$ are all positive semi-definite.
\end{proof}

In~\cite{Berg} Berg, Jochemko and Silverstein investigated when $h^2$-tensor polynomials have only positive semi-definite coefficients. They proved that the coefficients are indeed positive semi-definite for lattice polygons~\cite[Theorem 5.2]{Berg} and conjectured that this holds more general in arbitrary dimensions~\cite[Conjecture 6.1]{Berg}. By Proposition~\ref{prop:positivesemidefinite}, it follows that Theorem~\ref{thm:polygon} is equivalent to~\cite[Theorem 5.2]{Berg}; the proof given in Section~\ref{sec:2d} is arguably simpler.
\begin{cor}[{\cite[Theorem 5.2]{Berg}}]
The $h^2$-tensor polynomial of any lattice polygon has only positive semi-definite coefficients.
\end{cor}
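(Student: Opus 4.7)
The plan is simply to chain together the two previously established results, Proposition~\ref{prop:positivesemidefinite} and Theorem~\ref{thm:polygon}, since the corollary is literally their combination.

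First, I would invoke Proposition~\ref{prop:positivesemidefinite}, which translates the tensorial statement about positive semi-definiteness into a scalar statement about weighted $h^\ast$-polynomials: the $h^2$-tensor polynomial of a lattice polytope $P \subseteq \R^d$ has only positive semi-definite coefficients if and only if, for every linear form $\ell$ on $\R^d$, the weighted $h^\ast$-polynomial $h^\ast_{P,w}(t)$ with respect to the weight $w(x) = \ell(x)^2$ has only nonnegative coefficients. This equivalence is the workhorse: it reduces a matrix-valued positivity question to a family of scalar positivity questions parametrized by linear forms.

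Next, specializing to the case where $P$ is a lattice polygon, Theorem~\ref{thm:polygon} provides exactly the scalar nonnegativity required on the right-hand side of the equivalence, for \emph{every} linear form $\ell$, including those that change sign on $P$. Applying the ``if'' direction of Proposition~\ref{prop:positivesemidefinite} then yields that every coefficient of the $h^2$-tensor polynomial of $P$ is positive semi-definite, which is the corollary. There is essentially no obstacle here; the content has already been done in proving Theorem~\ref{thm:polygon}, and the only mild point to mention is that Proposition~\ref{prop:identification} ensures the identification of the coefficients $h_i^2(P)(v,v)$ with the coefficients of $h^\ast_{P,\ell^2}(t)$ when $\ell(x) = v^T x$, so that positive semi-definiteness of the symmetric matrices $h_i^2(P)$ is detected by testing all vectors $v \in \R^d$.
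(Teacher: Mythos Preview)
Your proposal is correct and matches the paper's approach exactly: the corollary is stated immediately after the remark that, by Proposition~\ref{prop:positivesemidefinite}, Theorem~\ref{thm:polygon} is equivalent to \cite[Theorem~5.2]{Berg}, and no further argument is given. Your write-up simply makes this implication explicit.
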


Furthermore,  Example~\ref{ex:negative} provides a $20$-dimensional lattice polytope together with a weight $w(x)=\ell (x)^2$ that is a square of a linear form such that $h_{P,w}(t)$ has a negative coefficient. By Proposition~\ref{prop:positivesemidefinite} this establishes a counterexample to ~\cite[Conjecture 6.1]{Berg}.

\begin{cor}\label{cor:tensorcounterexample}
There exists a $20$-dimensional lattice polytope whose $h^2$-tensor polynomial has a coefficient that is not positive semi-definite. In particular, this disproves~\cite[Conjecture 6.1]{Berg}
\end{cor}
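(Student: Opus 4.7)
The proof of Corollary~\ref{cor:tensorcounterexample} will be essentially a direct assembly of results already established earlier in the paper, so the plan is quite short. The idea is simply to take the specific polytope and weight constructed in Example~\ref{ex:negative} and translate the negative-coefficient phenomenon for its weighted $h^\ast$-polynomial into a failure of positive semi-definiteness for a coefficient of its $h^2$-tensor polynomial via Proposition~\ref{prop:positivesemidefinite}.

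More concretely, I would proceed as follows. First, I would recall the $20$-dimensional lattice simplex $\Delta'$ constructed in Example~\ref{ex:negative} and the specific linear form $\ell\colon \mathbb{R}^{20}\to\mathbb{R}$ defined there on the vertices by $\ell(v_i)=1$ for $0\le i\le 9$ and $\ell(v_i)=-1$ for $10\le i\le 19$. Second, I would invoke the calculation in Example~\ref{ex:negative}, which shows that the $t^{11}$-coefficient of $h^\ast_{\Delta',w}(t)$, with $w(x)=\ell(x)^2$, equals $\tfrac{-40}{9}$, and is in particular negative. Third, I would apply Proposition~\ref{prop:positivesemidefinite}: if every coefficient $M_i=h^2_i(\Delta')$ of the $h^2$-tensor polynomial of $\Delta'$ were positive semi-definite, then for any vector $v\in\mathbb{R}^{20}$ the quadratic form $v^T M_i v$ would be nonnegative for each $i$, and hence $h^\ast_{\Delta',w}(t)=\sum_i (v^T M_i v)t^i$ would have nonnegative coefficients for every $w(x)=(v^Tx)^2$. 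Since we have already exhibited a $v$ (namely the one defining $\ell$) for which this fails, at least one $M_i$ must fail to be positive semi-definite.

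Finally, I would observe that this directly contradicts~\cite[Conjecture 6.1]{Berg}, which predicts that every coefficient of the $h^2$-tensor polynomial of every lattice polytope is positive semi-definite. There is no real obstacle here: the only substantive work was the construction and computation performed in Example~\ref{ex:negative}, and the conceptual bridge supplied by Proposition~\ref{prop:identification} and Proposition~\ref{prop:positivesemidefinite}. So the proof of the corollary itself reduces to a one-paragraph deduction, and the dimension $20$ appearing in the statement is inherited verbatim from the dimension of $\Delta'$.
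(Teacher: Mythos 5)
Your proposal is correct and follows exactly the paper's own argument: the corollary is deduced by combining the negative $t^{11}$-coefficient ($2\cdot\tfrac{-20}{9}=\tfrac{-40}{9}$) of $h^\ast_{\Delta',\ell^2}(t)$ from Example~\ref{ex:negative} with the equivalence in Proposition~\ref{prop:positivesemidefinite}. Nothing is missing.
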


\section{Open question}
In Theorem~\ref{thm:main} we have proved sufficient conditions on the homogeneous weight function that yield nonnegative coefficients of the $h^\ast$-polynomial. We also have shown our results are tight, in particular, in Section \ref{sec:negative} we have seen that Theorem~\ref{thm:main} can fail if the assumptions are relaxed, even in the simple case of a  square of a single linear form.

We end this article posing a natural question.

\begin{ques}
Can we precisely characterize the family of homogeneous weights
  that yield nonnegative coefficients of the $h^*$-polynomial? 
\end{ques}

\section*{Acknowledgements}
We are grateful to the anonymous referees for carefully reading our manuscript. We thank the American Institute of Mathematics and the organizers of the  workshop ``Ehrhart polynomials: inequalities and extremal constructions'' in May 2022, as this project was started during that workshop. 
JDL was partially supported by NSF grant \#1818969 and ICERM.
AG was partially supported by the Alexander von Humboldt Foundation. 
SGM was supported by the Deutsche Forschungsgemeinschaft (DFG, German Research Foundation) under Germany's Excellence Strategy – The Berlin Mathematics Research Center MATH+ (EXC-2046/1, project ID: 390685689) and the Graduiertenkolleg ``Facets of Complexity'' (GRK 2434).
KJ was partially supported by the Wallenberg AI, Autonomous Systems and Software Program funded by the Knut and Alice Wallenberg Foundation, grant 2018-03968 of the Swedish Research Council and the Göran Gustafsson Foundation.
JY was partially supported by NSF grant \#1855726.   
\bibliographystyle{amsalpha}

\bibliography{biblio}

\end{document}